\newcommand{\R}{\mathbb{R}}
\newcommand{\C}{\mathbb{C}}
\newcommand{\N}{\mathbb{N}}
\newcommand{\SL}{{\rm SL}}
\newcommand{\GL}{{\rm GL}}
\newcommand{\Pp}{\mathbb{P}}
\newcommand{\MM}{\mathcal{M}}
\theoremstyle{plain}
\newtheorem{theorem}{Theorem}[section]
\newtheorem{proposition}{Proposition}[section]
\newtheorem{corollary}[proposition]{Corollary}
\newtheorem{lemma}[proposition]{Lemma}
\theoremstyle{definition}
\newtheorem{definition}{Definition}[section]
\newtheorem*{theorem*}{Theorem}
\theoremstyle{definition}
\newtheorem{remark}{Remark}[section]
\numberwithin{equation}{section}
\newcommand{\abs}[1]{\left| #1 \right|} 
\newcommand{\norm}[1]{\left\|#1\right\|} 
\newcommand{\normtwo}[1]{
{\left\vert\kern-0.25ex\left\vert\kern-0.25ex\left\vert #1
    \right\vert\kern-0.25ex\right\vert\kern-0.25ex\right\vert} }
\newcommand{\ep}{\epsilon}
\newcommand{\om}{\omega}
\newsavebox\myboxA
\newsavebox\myboxB
\newlength\mylenA
\newcommand*\xoverline[2][0.75]{%
    \sbox{\myboxA}{$\m@th#2$}%
    \setbox\myboxB\null
    \ht\myboxB=\ht\myboxA%
    \dp\myboxB=\dp\myboxA%
    \wd\myboxB=#1\wd\myboxA
    \sbox\myboxB{$\m@th\overline{\copy\myboxB}$}
    \setlength\mylenA{\the\wd\myboxA}
    \addtolength\mylenA{-\the\wd\myboxB}%
    \ifdim\wd\myboxB<\wd\myboxA%
       \rlap{\hskip 0.5\mylenA\usebox\myboxB}{\usebox\myboxA}%
    \else
        \hskip -0.5\mylenA\rlap{\usebox\myboxA}{\hskip 0.5\mylenA\usebox\myboxB}%
    \fi}
\newcommand{\Proj}{\mathbb{P}(\R^d)}
\newcommand{\Gr}{{\rm Gr}}
\newcommand\restr[2]{{
  \left.\kern-\nulldelimiterspace 
  #1 
  \vphantom{\big|} 
  \right|_{#2} 
  }}
\newcommand{\Prob}{\mathrm{Prob}}
\newcommand{\supp}{\mathrm{supp}}
\title[Analyticity of Lyapunov Exponents]{Analyticity of the Lyapunov exponents of random products of matrices}
\date{}
\begin{document}

\author[A. Amorim]{Artur Amorim}
\address{Instituto de Matem\'atica Pura e Aplicada  (IMPA), Brazil}
\email{artur.amorim@impa.br}

\author[M. Dur\~aes]{Marcelo Dur\~aes}
\address{Departamento de Matem\'atica, Pontif\'icia Universidade Cat\'olica do Rio de Janeiro (PUC-Rio), Brazil}
\email{accp95@gmail.com}

\author[A. Melo ]{Aline Melo}
\address{Instituto de Matem\'atica, Universidade Federal do Rio de Janeiro (UFRJ), Brazil}
\email{alinedemelo.m@gmail.com}

\begin{abstract}
This paper is concerned with the study of random (Bernoulli and Markovian) product of matrices 
on a compact space of symbols. We establish the analyticity of the maximal Lyapunov exponent as a function of the transition probabilities, thus extending the results and methods of Y. Peres from a finite to an infinite (but compact) space of symbols. Our approach combines the spectral properties of the associated Markov operator with the theory of holomorphic functions in Banach spaces.
\end{abstract}

\maketitle

\tableofcontents

\section{Introduction}\label{introv2}
Let $\Sigma \subset {\GL}_d(\R)$ be a compact subset and let $X := \Sigma^{\N}$ be the space of sequences $g = \{g_n\}_{n\in\N}$ on $\Sigma$. Denote by $\Prob (\Sigma)$ the set of probability measures on $\Sigma$.



Given $\mu \in \Prob (\Sigma)$, let $g = \{g_n\}_n$ be an independent and identically distributed (i.i.d) multiplicative process with law $\mu$. By Furstenberg-Kesten's theorem, there is a number $L_1 \in \R$, called the first (or maximal) \emph{Lyapunov exponent} of the process such that
$$
\frac{1}{n} \log \Vert g_{n-1}\cdots g_0 \Vert \to L_1 \quad \text{a.s.}
$$

Replacing the norm (or largest singular vale) of the product $g_{n-1}\cdots g_0$ by the other singular values, we obtain all the other Lyapunov exponents $L_2, \ldots, L_d$.



An important question in the study of Lyapunov exponents is concerned with its regularity. By this we mean the study of the continuity or the modulus of continuity of the top Lyapunov exponent with respect to the underlying data. This is a very subtle question that has motivated a lot of research over the last decades. It turns out that the choice of the topology will greatly influence the answer. 

We start by introducing Kifer's example, which shows that the Lyapunov exponent is, in general, not continuous. Consider the following matrices:
$$
A_1 = 
\begin{pmatrix}
2 & 0 \\
0 & \frac{1}{2}
\end{pmatrix} \quad \quad \quad A_2= 
\begin{pmatrix}
0 & -1 \\
1 & 0
\end{pmatrix}.
$$

We attribute to each of them the probabilities $p_1$ and $p_2$, respectively, such that $p_1+p_2=1$. Note that, if $p_1 =1$ and $p_2=0$, the Lyapunov exponent is equal to $\log 2$. However, when $p_2>0$, it is equal to $0$. 
Therefore, we may consider a sequence of cocycles where the matrices above are fixed and it is determined by the sequence of probabilities where $p_{2,n} = \frac{1}{2^n}$ and $p_{1,n} = 1-p_{2,n}$. Thus, we conclude that the Lyapunov exponent is not continuous. 
Moreover, one can consider the same example using the probability measure given by $\mu = p_1 \delta_{A_1} + p_2 \delta_{A_2}$. It is clear that continuity does not hold either relative to the weak* topology or on the total variation norm without further hypothesis.

Furstenberg, Kifer \cite{FK83} and Hennion \cite{Hen84} proved the continuity of the Lyapunov exponent with respect to the distribution $\mu$ in the weak* topology assuming an irreducibility condition, which is the absence of a subspace that is invariant for almost every matrix. Bocker and Viana \cite{BV} proved its continuity with respect to the topology induced by the weak* convergence plus the convergence relative to the Hausdorff distance of the supports without any irreducibility condition for random products of $\GL_2(\R)$ matrices. This result was generalized to the Markov case by Malheiro and Viana \cite{MaV} and then to fiber bunched cocycles by Backes, Brown and Butler \cite{BBB}. More recently, the result of Bocker and Viana was extended to $\GL_d(\R)$ matrices by Avila, Eskin, and Viana \cite{AvEV}.

Regarding quantitative results, Le Page \cite{LePage} proved H\"older continuous for a one-parameter family (under appropriate assumptions) which was generalized by Duarte and Klein \cite[Chapter 5]{DK-book}, where they proved that the Lyapunov exponent is a H\"older function of the cocycle. This result was then extended in different directions. Barrientos and Malicet \cite{BarMal25} managed to substitute the compactness by an exponential moment condition. Independently, Duarte and Graxinha \cite{DG25} also extended the result for the non compact but also non invertible case. Moreover, Duarte, Klein and Poletti \cite{DKP} extended it to the non locally constant case of H\"older linear cocycles with partially hyperbolic projective actions over hyperbolic dynamics.

The results presented in the previous paragraph assume some irreducibility hypothesis and are considered to be the generic case. The following results were proven in dimension $2$ and do not assume any kind of irreducibility.
Duarte and Klein \cite{DK-Holder} proved a weak H\"older modulus of continuity for the Lyapunov exponent
just assuming that $L_1$ is simple and that the underlying measure is finitely supported. Also in this setting,
Tall and Viana \cite{Tall-Viana} proved that, assuming just the simplicity hypothesis, the Lyapunov exponent is pointwise H\"older continuous and that even when the Lyapunov exponent is not simple, it is at least pointwise log-H\"older continuous.

On the other hand, the following example due to Halperin, which was then formalized by Simon and Taylor \cite[Appendix 3A]{ST85} provides an upper bound to the regularity of the Lyapunov exponent even in the generic case, and shows that the results of H\"older continuity are sharp. 
The example consists on a $1$-parameter family of measures on $\SL_2(\mathbb{R})$, given by 
$\mu_{a,b,E} := \frac{1}{2} \delta_{A_E} + \frac{1}{2} \delta_{B_E}$, where
\[
A_E = \begin{pmatrix}
a - E & -1 \\
1 & 0
\end{pmatrix}, \quad
B_E = \begin{pmatrix}
b - E & 0 \\
1 & 0
\end{pmatrix}.
\]

It follows from \cite[Theorem A.3.1]{ST85} that the map
$E \mapsto L(\mu_{a,b,E})$ is not $\alpha$-H\"older continuous for any 
\[
\alpha > \frac{2 \log 2}{\mathrm{arccosh}(1 + |a - b|/2)}.
\]

Note that the measures $\mu_{a,b,E}$ satisfy the assumptions of Le Page's theorem, which implies 
that the function $E \mapsto L(\mu_{a,b,E})$ is indeed H\"older continuous. However, the H\"older exponent $\alpha$ can be arbitrarily small as the difference $a - b$ is chosen to be very large. Moreover, one may consider the Wasserstein distance $W_1$, which metrizes the weak* topology, and compute the distance between two measures of this family. A straightforward computation shows that $W_1(\mu_{a,b,E_1}, \mu_{a,b,E_2}) \leq |E_1 - E_2|$. If we consider the same metric $\tau$ used in the work of Bocker and Viana, which consists of the Wasserstein metric added to the Hausdorff distance between the supports of the measures, the conclusion is the same, namely we have that $\tau(\mu_{a,b,E_1}, \mu_{a,b,E_2}) \leq 2|E_1 - E_2|$. This implies that the Lyapunov exponent cannot have a better regularity  than H\"older continuity neither for the weak* topology, nor for the weak* plus Hausdorff, even under the hypothesis of Le Page's theorem. Other works that also prove bounds for the regularity of the Lyapunov exponents include \cite{DKS19} by Duarte, Klein and Santos, \cite{BezDua23} by Bezerra and Duarte and \cite{BCDFK24} by Bezerra, Cai, Duarte, Freijo and Klein.

Moreover, a higher modulus of continuity was proved by Ruelle \cite{Rue79a}. He showed that the Lyapunov exponent restricted to the set of general uniformly hyperbolic cocycles is a real analytic function with respect to the fiber dynamics. On the context of random matrix products, Peres \cite{Pe91} showed that when $\Sigma$ is finite and $L_1$ is simple, the Lyapunov exponent is an analytic function of the probability weights. Bezerra, Sanchez, and Tall \cite{BST23} extended this result to random products of quasi-periodic cocycles. Recently, Baraviera and Duarte \cite{BD} obtained an extension of Peres in another direction. They proved that for a compact, but possibly infinite $\Sigma$, under quasi-irreducibility hypothesis, the Lyapunov exponent is Lipschitz with respect to the total variation norm.


The goal of this work is to extend the result of Peres to a broader scenario. By his work, it is natural to think that the dependence of the Lyapunov exponent on its underlying measure should be more regular as a function of its weight than as a function of its support. A natural setting to be considered is the one of measures that are absolutely continuous with respect to a reference measure. In this setting, we obtain in corollary \ref{analiticity Absolutely Continous} the analyticity of the Lyapunov exponent as a function of the densities.

By the example of Halperin, Simon and Taylor, it is not possible to obtain any modulus of continuity better than $\alpha$-H\"older for the Lyapunov exponents as a function of the underlying measure neither in the weak* topology nor on the weak* plus Hausdorff. Therefore, in order to obtain such a high regularity result, we must consider a stronger topology.

By putting together ideas of Peres, Baraviera and Duarte and tools of complex analysis in Banach spaces, we establish the analyticity of the top Lyapunov exponent with respect to the total variation norm. 
More precisely, we consider the space of complex measures endowed with the total variation norm, which is a Banach space. The subspace of complex measures with zero total variation is also Banach. Then, the set of complex measures with total variation equal to one is a translation of this Banach space which contains the set of probability measures. We prove that, in this affine Banach space, there is an extension of the Lyapunov exponent which is a holomorphic function. Therefore, we conclude that, when restricted to the probability measures, this extension is the Lyapunov exponent and it is a real analytic function. (Precise definitions of analyticity and total variation will be given in section \ref{preliminar}).

We state this result in two different settings.
In the first one, we assume a quasi-irreducibility hypothesis, which means the absence of proper invariant subspaces where the Lyapunov exponent restricted to it is not maximal. In other words, we assume that either there are no proper invariant subspaces or, if there exists such a subspace, then the Lyapunov exponent restricted to it is maximal.   
In the second one, instead of quasi-irreducibility, we assume that the probability measure has full support (which is an analogue of the assumption that each matrix has a positive probability in the finite support case of Peres \cite{Pe91}). 
More precisely, we establish the following.

\begin{theorem} \label{analiticity TV Bernoulli}
Let $\Sigma \subset {\GL}_d(\R)$ be a compact subset, $\mu_0 \in \Prob (\Sigma)$ and assume that $L_1(\mu_0) > L_2(\mu_0)$. 
\begin{enumerate}
\item[(1)] If $\mu_0$ is quasi-irreducible, then $\mu \mapsto L_1(\mu)$ is real analytic with respect to the total variation norm in a neighbourhood of $\mu_0$.
\item[(2)] If $\supp(\mu_0) = \Sigma$, then $\mu \mapsto L_1(\mu)$ is real analytic with respect to the total variation norm in a neighbourhood of $\mu_0$.
\end{enumerate}
\end{theorem}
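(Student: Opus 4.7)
The strategy is to realize $L_1(\mu)$ as a bilinear pairing involving the stationary measure of a Furstenberg--Markov operator that depends \emph{linearly} on $\mu$, and then to invoke Kato's analytic perturbation theory on a carefully chosen Banach space.

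\textbf{Step 1: A linear family of operators.} Fix $\alpha \in (0,1)$ sufficiently small and set $\Bcal := C^{\alpha}(\Proj)$. Define the Furstenberg operator $\Qop_\mu : \Bcal \to \Bcal$ by
$$(\Qop_\mu \phi)(\hatv) := \int_\Sigma \phi(g\cdot \hatv)\, d\mu(g).$$
Since $\Sigma \subset \GL_d(\R)$ is compact, each $g \in \Sigma$ acts on $\Proj$ with a bi-Lipschitz constant bounded uniformly in $g$, so there exists $C = C(\Sigma,\alpha)$ with
$$\norm{\Qop_\mu \phi}_{C^\alpha} \le C\, \norm{\mu}_{TV} \, \norm{\phi}_{C^\alpha}.$$
Thus $\mu \mapsto \Qop_\mu$ is a bounded linear map from the space of signed measures $(M(\Sigma),\norm{\cdot}_{TV})$ into $B(\Bcal)$, in particular entire analytic in $\mu$.

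\textbf{Step 2: Spectral gap at $\mu_0$.} From $L_1(\mu_0) > L_2(\mu_0)$ together with either quasi-irreducibility or $\supp(\mu_0) = \Sigma$, one shows that $\Qop_{\mu_0}$ admits a spectral gap on $\Bcal$: the value $1$ is an isolated simple eigenvalue with one-dimensional eigenspace, and the rest of the spectrum lies in a disk of radius $\rho < 1$. In case (1) this follows from Le Page--Duarte--Klein contraction estimates: quasi-irreducibility combined with simplicity of the top exponent forces uniqueness of the $\mu_0$-stationary measure realizing $L_1$, possibly after restriction to an appropriate invariant projective subvariety. In case (2) full support plays the role of positivity of every probability weight in Peres's finite-support setting, again giving uniqueness of the top stationary measure and the desired gap via Baraviera--Duarte-style contraction arguments.

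\textbf{Step 3: Analytic spectral projection and Furstenberg formula.} Fix a small circle $\gamma \subset \C$ separating $1$ from $\{|z|\le\rho\}$. For $\mu$ in a sufficiently small $\norm{\cdot}_{TV}$-neighbourhood $U$ of $\mu_0$, the contour integral
$$P_\mu := \frac{1}{2\pi i}\oint_{\gamma} (z - \Qop_\mu)^{-1}\, dz$$
defines the rank-one spectral projection onto the top eigenspace and, by Kato's holomorphic functional calculus, it is analytic in $\mu$ as a $B(\Bcal)$-valued function. So are the normalized top eigenfunction $h_\mu \in \Bcal$ and its dual, the stationary probability measure $\eta_\mu \in \Bcal^*$. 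Combining with the Furstenberg formula,
$$L_1(\mu) \;=\; \int_\Sigma \int_\Proj \log\frac{\|g v\|}{\|v\|}\, d\eta_\mu(\hatv)\, d\mu(g) \;=\; \langle \eta_\mu,\, \Phi_\mu \rangle,$$
where $\Phi_\mu(\hatv) := \int_\Sigma \log(\|g v\|/\|v\|)\, d\mu(g) \in \Bcal$ is itself linear in $\mu$, we find that $L_1$ is the pairing of two analytic-in-$\mu$ objects, and is therefore real analytic on $U$.

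The principal obstacle is Step 2: establishing the spectral gap for $\Qop_{\mu_0}$ on $\Bcal$ under the relatively weak hypotheses of cases (1) and (2). Neither quasi-irreducibility nor full support matches the standard strong-irreducibility-plus-proximality setting of Le Page's theorem, so the contraction-on-projective-space argument must be adapted---by restriction to an invariant subvariety in case (1), or by exploiting the genericity provided by full support in case (2). One must additionally check that the gap persists uniformly in a $\norm{\cdot}_{TV}$-neighbourhood of $\mu_0$, so that the Kato contour $\gamma$ can be fixed independently of $\mu$.
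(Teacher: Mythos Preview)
Your spectral/Kato route for case (1) is a legitimate alternative to the paper's argument, and both ultimately rest on the same ingredient: the contraction estimate $k_\alpha(\mu_0^{*n}) \le C\theta^{-n}$ on $C^\alpha(\Proj)$, which under quasi-irreducibility and $L_1>L_2$ does yield a spectral gap (simple eigenvalue $1$, unique stationary measure). The paper, however, avoids Kato theory entirely. It extends the contraction to complex measures $\mu$ in a total-variation neighbourhood of $\mu_0$, shows directly that $\{Q_\mu^n\varphi(\hat v)\}_n$ is Cauchy uniformly in $\mu\in\MM_1(\Sigma)$ near $\mu_0$, observes that along each complex line $\mu+z\nu$ (with $\nu(\Sigma)=0$) the $n$-th iterate is a polynomial in $z$, and concludes by the G\^ateaux-holomorphy\,+\,continuity criterion. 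Your approach packages the same contraction into a resolvent integral; the paper's is more elementary but yields the same conclusion.

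Your treatment of case (2), however, has a genuine gap. Full support does \emph{not} give a spectral gap for $\Qop_{\mu_0}$ on $C^\alpha(\Proj)$ when $\mu_0$ fails to be quasi-irreducible. Take $\Sigma$ to be any compact set of diagonal matrices $\mathrm{diag}(a,b)$ with $a>b>0$ and let $\mu_0$ have full support; then $L_1>L_2$, yet both $\delta_{[e_1]}$ and $\delta_{[e_2]}$ are $\mu_0$-stationary and $Q_{\mu_0}^n\phi(\hat e_i)=\phi(\hat e_i)$ for all $n$, so $Q_{\mu_0}^n$ does not converge to a rank-one projection and $1$ is not isolated-simple. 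The Baraviera--Duarte contraction you invoke explicitly assumes quasi-irreducibility, and Peres's original positive-weights argument does not proceed via a global spectral gap either. The paper handles case (2) by an entirely different mechanism: it uses full support only to guarantee that any $\mu_0$-invariant subspace $W$ is invariant for \emph{every} $g\in\Sigma$, hence for every nearby $\mu$; then $L_1(\mu)=\max\{L_1(\mu|_W),L_1(\mu|_{\R^d/W})\}$, continuity of the exponents pins down which side realizes the maximum throughout a neighbourhood, and one recurses in strictly smaller dimension until the restricted cocycle is irreducible, at which point case (1) applies. You would need to replace your Step 2 for case (2) with this reduction.
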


\begin{remark}
Similar results hold for absolutely continuous measures and for random locally constant linear cocycles whose domain $\Sigma$ is an arbitrary compact set mapped to $\GL_d (\R)$ by a measurable and bounded function (see Section \ref{Corollaries}).
\end{remark}

\begin{remark}
In section \ref{Corollaries} we include an example where $\Sigma$ is not compact and the Lyapunov exponent is not even continuous. 
\end{remark}

\begin{remark}
	A natural question is if there exists any measure $\mu_0$, such that $L_1(\mu_0)> L_2(\mu_0)$, $\supp(\mu_0) \subsetneq \Sigma$ and $L_1$ is not analytic at $\mu_0$. The answer is Kifer's example with $\mu_0 = \delta_{A_1}$, that is, $p_1 =1$ and $p_2 = 0$, in which case the Lyapunov exponent is not even continuous. Therefore, we conclude that the simplicity of the Lyapunov exponent is not a sufficient condition for the result to hold in this generality.
	\end{remark} 
	

\begin{remark}

One may also ask how far is the total variation norm from being optimal. This is a very broad question. We analyse it this from the following point of view.
There is a natural way to define metrics on the space of measures, which is through the convergence of integrals in a given function space. In this context, the strength of a metric is related to the choice of the function space.

	Note that $\GL_d (\R)$ is a Polish space. A consequence of this fact, which follows from \cite[Theorem $6.9$]{Villani-OptimalTransport}, is that the following metric generates the weak* topology on the space of probability measures of bounded support:
	\begin{align*}
		&d_{\alpha} (\mu, \nu) := \sup_{v_\alpha(f)\le 1} \left \{\int f(x) d(\mu - \nu) (x) : f \in C^{\alpha} (\GL_d (\R), \R) \right \}
		\end{align*}
	where $0<\alpha\le 1$ and $v_\alpha(f):= \sup_{x \not = y} \frac{|f(x)-f(y)|}{|x-y|^{\alpha}}$. Another consequence is that the total variation norm can be characterized as:
	\begin{align*}
		&\norm{\mu -\nu}_{TV}:= \sup_{\norm{f}_{\infty} \le 1} \left \{\int f(x) d(\mu - \nu) (x) : f \in C^{0} (\GL_d (\R), \R) \right \}.
	\end{align*}

	We know that Theorem \ref{analiticity TV Bernoulli} holds under total variation norm and, from Halperin's example, that it does not hold for the weak* topology. We conclude that there is a family of metrics parametrized by $\alpha \ge 0$, whose strength decreases as $\alpha$ grows, such that the result holds for $d_0$ and fails to hold for $d_\alpha$ for every $\alpha>0$.
\end{remark}

It turns out that a similar statement holds for locally constant Markov linear cocycles, that is, linear cocycles over a Markov shift on a compact, possibly infinite space of symbols. Let us formally introduce the concept of Markov cocycles.

A Markov transition kernel $K \colon \Sigma \to \Prob (\Sigma)$ generalizes the concept of stochastic matrix that appears in the sub-shift of finite type. We will consider it to be continuous in the sense that given any Borel measurable set $E \subset \Sigma$, the map $x \mapsto K_x(E)$ is continuous.


A measure $\mu \in \Prob (\Sigma)$ is called $K$-stationary if for all Borel sets $E$, $\mu (E) = \int_\Sigma K_x (E) \, d \mu (x)$. 
The iterated Markov kernels $K^n$, $n\ge 1$ are defined inductively by $K^1 = K$ and $K^{n+1}_x (E) = \int_\Sigma K^n_y (E) \, d K_x (y)$, for all $x \in \Sigma$ and all Borel sets $E \subset \Sigma$. We assume that the kernel $K$ is {\em uniformly ergodic}, meaning that $K_x^n$ converges to $\mu$ uniformly (in $x \in \Sigma$) relative to the total variation distance. In this case the convergence is necessarily exponential (see \cite[Theorem 16.0.2]{meyn_tweedie_glynn_2009}) and the $K$-stationary measure $\mu = \mu_K$  is unique. This is the analogue of the stochastic matrix being primitive, in the finite setting. 
 
%






Let $K$ be a uniformly ergodic Markov kernel in $\Sigma$, $\mu$ be its unique stationary measure and let $\Pp_K = \Pp _{K, \mu_K}$ be the Markov measure on $X$ with initial distribution $\mu_K$ and transition kernel $K$. Given a continuous function $A \colon \Sigma \times \Sigma \to \GL_d (\R)$ and its extension to a locally constant fiber map on $X$, consider the corresponding skew product map $F_A \colon X \times \R^d \to X \times \R^d$, $F_A (\om, v) = (\sigma \om, A (\om_1, \om_0) v)$, where $\sigma$ denotes the forward shift.
 We regard $F_A$  as a linear cocycle over the Markov shift $(\sigma, \Pp_K)$ and refer to this dynamical system as a {\em Markov linear cocycle}. Its iterates are given by $F_A^n(\om,v) = (\sigma^n \om, A^n(\om)v)$, where
 $$
 A^n(\om)= A(\om_{n},\om_{n-1}) \dots A(\om_2,\om_1)A(\om_1,\om_0).
 $$ 
Similarly to the i.i.d product of matrices, there exists a number $L_1 \in \R$ which is called the top Lyapunov exponent of the process such that
$$
\frac{1}{n} \log \norm{A^n(\om)v} \to  L_1 \quad \text{a.s.}
$$


We fix $A$ and denote it by $L_1(K)$ to emphasize the dependence on the kernel $K$.

We call a transition kernel $K$ quasi-irreducible if the corresponding Markov cocycle $F_A$ is quasi-irreducible, meaning that there is no proper, invariant measurable section $\mathcal{V} \colon \Sigma \to \Gr(\R^d)$ such that the top Lyapunov exponent restricted to it is not maximal. $\Gr(\R^d)$ denotes the Grassmannian manifold of $\R^d$ and  an invariant section $\mathcal{V}$ is a map satisfying 
$A(\om_1, \om_0) \mathcal{V} (\om_0) = \mathcal{V} (\om_1)$. 



Given two kernels $K$ and $L$, we may consider a partial order relation between their supports. We say that $\supp ({L}) \subseteq \supp ({K})$ if $\supp ({L_\om}) \subseteq \supp ({K_\om})$ for every $\om \in \Sigma$.
For any fixed kernel $K$, we may define the set of continuous kernels whose supports are contained in the support of $K$ as the following: 
$$
\mathcal{S}(K) := \{ L \in \mathcal{K}(\Sigma) \colon \supp ({L}) \subseteq \supp ({K}) \}.
$$


Similarly to the i.i.d case, we consider the space of continuous complex kernels and endow it with the norm $\norm{K} := \sup_{\om \in \Sigma} \norm{K_\om}_{T.V.}$, such that it becomes a Banach space. We then consider an affine Banach subspace of it that contains the kernels such that $K_\om \in \Prob(\Sigma)$ for every $\om \in \Sigma$. We prove that there exists an extension of the Lyapunov exponent that is holomorphic, therefore the Lyapunov exponent is itself a real analytic function.

\begin{theorem} \label{analiticity TV Markov}
Let $K_0$ be a uniformly ergodic kernel on $\Sigma$ such that  $L_1(K_0) > L_2(K_0)$.
\begin{enumerate}
\item[(1)] If $K_0$ is quasi-irreducible, then $K \mapsto L_1(K)$ is real analytic  in a neighbourhood of $K_0$.
\item[(2)] The map $K \mapsto L_1(K)$ is real analytic  in a neighbourhood of $K_0$ in $\mathcal{S}(K_0)$.
\end{enumerate}
\end{theorem}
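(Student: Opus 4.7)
The plan is to reduce the theorem to an application of analytic perturbation theory for the Markov operator on the projective bundle. Consider the operator
$$
(\Qop_K f)(x, \hat v) = \int_\Sigma f\bigl(y,\, A(y,x)\cdot \hat v\bigr)\, dK_x(y),
$$
acting on a Banach space $\Bscr$ of functions on $\Sigma\times \Pp(\R^d)$ that is strong enough to support a spectral gap but still paired continuously with the total variation norm on kernels; a natural choice is continuous in $x$ and H\"older of a fixed exponent in $\hat v$. Since $K\mapsto \Qop_K$ is affine, it is automatically real analytic as a map from continuous kernels (with the total variation norm) into the bounded operators $\Lscr(\Bscr)$, and extends holomorphically to complex signed kernels in a neighbourhood of $K_0$.

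The crucial step is to prove that $\Qop_{K_0}$ admits a spectral gap on $\Bscr$: the constant function $\one$ is a simple eigenvalue with eigenvalue $1$, and the remainder of the spectrum lies in a disk of radius strictly less than $1$. This combines the uniform ergodicity of $K_0$ on the base (which gives exponential contraction for functions of $x$ alone) with exponential contraction on the projective fiber coming from the simplicity $L_1(K_0) > L_2(K_0)$. In case~(1), quasi-irreducibility together with simplicity provides uniqueness of the stationary measure on $\Sigma\times\Pp(\R^d)$ and, via a Le Page / Guivarc'h type contraction argument adapted to the Markov setting, the required H\"older contraction on the fiber. In case~(2), the assumption $\supp(\mu_{K_0})=\Sigma$ combined with the support order $\supp(L)\le\supp(K_0)$ inside $\Sscr(K_0)$ allows the same contraction estimates to be run uniformly over a neighbourhood, bypassing quasi-irreducibility in the spirit of Peres's treatment of positive weights in the finite setting.

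Once the spectral gap is in place, Kato's analytic perturbation theorem furnishes a neighbourhood of $K_0$ on which the simple eigenvalue $1$ of $\Qop_K$ persists, and both the normalised right eigenvector $h_K\in\Bscr$ and the $\Qop_K$-stationary probability measure $\eta_K$ on $\Sigma\times\Pp(\R^d)$ depend analytically on $K$. The Furstenberg formula
$$
L_1(K) = \int_{\Sigma\times\Pp(\R^d)}\!\int_\Sigma \log\frac{\|A(y,x)v\|}{\|v\|}\, dK_x(y)\, d\eta_K(x,[v])
$$
then expresses $L_1(K)$ as the pairing of the analytic measure-valued map $K\mapsto\eta_K$ with an expression that is itself affine (hence analytic) in $K$, so analyticity of $K\mapsto L_1(K)$ follows from the analyticity of bilinear pairings of analytic $\Bscr$- and $\Bscr^*$-valued maps.

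The main obstacle is the spectral gap step. One must choose $\Bscr$ so that (i) kernels close in total variation act close in operator norm, (ii) $\Qop_{K_0}$ genuinely contracts on the quotient of $\Bscr$ by the one-dimensional eigenspace of $\one$, and (iii) the chosen norm is compatible with both the quasi-irreducibility analysis and, in case~(2), the support restriction. Extending the Baraviera--Duarte machinery for infinite compact $\Sigma$ to Markov cocycles, and replacing Peres's finite-alphabet combinatorics by functional-analytic contraction on $\Bscr$, is where the substantive work will lie; in particular, the tension between the weak total variation topology on kernels and the strong norm needed on $\Bscr$ for a spectral gap must be balanced carefully.
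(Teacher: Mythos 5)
For case~(1), your spectral gap plus Kato perturbation route is a genuinely different approach from the paper. The paper avoids operator spectral theory entirely: it observes that $Q_{K+zL}=Q_K+zQ_L$, so $z\mapsto Q^n_{K+zL}\varphi(\hat v)$ is a polynomial of degree at most $n$, shows these polynomials converge uniformly (in $z$, via the $k_\alpha$-contraction estimate, Proposition~\ref{contraction of Valpha Markov}), hence the limit is holomorphic in $z$, i.e.\ G-holomorphic, and then upgrades to holomorphy using Theorem~\ref{Graves-Taylor-Hille-Zorn} together with continuity. Your route requires verifying (a) that $K\mapsto Q_K$ is continuous into $\Lscr(C^\alpha(\Sigma\times\Proj))$ in the operator norm (true, because the $k_\alpha$-type integrand is uniformly bounded on compact $\Sigma$, so $\|Q_{K-K'}\|_{C^\alpha\to C^\alpha}\lesssim\|K-K'\|$), and (b) that $1$ is a simple isolated eigenvalue (which does follow from the $v_\alpha$-contraction plus uniform ergodicity of $K_0$). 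The payoff of your version is that it also yields analyticity of $\eta_K$ directly; the paper's version is more elementary and sidesteps the need to locate the spectrum.

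For case~(2) there is a genuine gap. You assert that the support order $\supp(L)\le\supp(K_0)$ lets you ``run the same contraction estimates uniformly over a neighbourhood, bypassing quasi-irreducibility.'' This is not correct: without quasi-irreducibility there may exist an invariant section $\mathcal V$ with $L_1(K_0,\mathcal V)<L_1(K_0)$, and for directions near $\mathcal V$ the projective action does \emph{not} contract at the rate governed by $L_1-L_2$. Consequently $k_\alpha$ need not be less than~$1$, there need not be a spectral gap, and the eigenvalue $1$ of $Q_{K_0}$ on $C^\alpha(\Sigma\times\Proj)$ need not be simple (there can be several stationary measures on the projective bundle). The paper's actual argument for case~(2) is structurally different and must be inserted here: one takes any nontrivial invariant section $\mathcal V$, uses $L_1(K_0)=\max\{L_1(K_0,\mathcal V),\,L_1(K_0,\R^d/\mathcal V)\}$, and uses the fact that the support constraint $\supp K\le\supp K_0$ makes $\mathcal V$ remain invariant for all nearby $K\in\Sscr(K_0)$, together with the continuity of $L_1$ under support restriction, to conclude $L_1(K)=L_1(K,\mathcal V)$ on a neighbourhood; one then restricts the cocycle to $\mathcal V$ (or to the quotient) and iterates, reaching an irreducible restriction in finitely many steps where case~(1) applies. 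This finite reduction by invariant sections, not a direct contraction argument, is what replaces Peres's treatment of positive weights, and it is absent from your proposal.
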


The main ideas in the proof of the Markov setting are analogous to the ones that appear in the i.i.d case, but we need to use tools from \cite{CDKM1} and \cite{DK-book} to deal with it. 

\begin{remark}
If we assume that the Lyapunov spectrum is simple, then item $2$ of both theorems holds for all the Lyapunov exponents. 

Additionally, if all exterior powers $\wedge_k g$, $1\le k < d$ of the sequence $g = \{g_n\}$ are quasi-irreducible, then item $1$ of both results also
holds for all the other Lyapunov exponents, as long as they are simple. 

%
\end{remark}

The rest of the paper is organized as follows. In Section \ref{preliminar} we introduce the concept of holomorphy in Banach spaces and a useful characterization of it, which 
will play a very important role in our proof. 
Moreover, we introduce the Markov operator  $Q_\mu$ and describe its general properties. 
In Section $3$ we study the convergence of the iterates of the Markov operator. When $\mu$ is a probability, there is a suitable observable, such that the iterates of the Markov operator applied to it converge to the top Lyapunov exponent $L_1$. More generally, when $\mu$ is any complex measure, we show that there is a domain where the limit function is holomorphic, which proves theorem \ref{analiticity TV Bernoulli}. 
In Section $4$ we extend the previous results to the Markov setting and prove theorem \ref{analiticity TV Markov}.


\section{Preliminaries}\label{preliminar}
This section is divided in two parts, both of which serve the purpose of constructing a setting that permits a generalization of Peres's arguments. 
The first part recalls the concept of analiticity in infinite dimensional Banach spaces as well as a useful criteria thereof.
The second part is devoted to the study of Markov operators defined by probabilities and the relationship with the convolution of measures. We define the average H\"older constant associated to a measure and derive a few estimates related to this quantity, which will be essential in chapter $3$. General references for these concepts are \cite{CB85} and \cite{Whitlessey}.

Throughout this chapter, $M$ and $N$ will denote Banach spaces over $\C$ and $U$ will denote an open subset of $M$. Moreover, whenever we write spaces such as $C^{\alpha} (\Proj), L^{1} (\mu)$ or $L^{\infty} (\mu)$, they should be understood as their complex version, unless stated otherwise. 

\subsection{Holomorphic functions in Banach spaces}


A function $f \colon U \to N$ is said to be \emph{holomorphic at a point} $a \in U$ 
if there exists $r>0$ and $n$-linear symmetric continuous map $T_n \colon M \times \dots \times M \to N$, for $n \in \N$ ($T_0$ is a constant) such that: 
$$
f(x) = \sum_{n=0}^\infty T_n(x-a)^n,
$$
for every $x \in B(a,r) \subset U$, where $T_n y^n$ denotes $T_n (y,y,\dots,y)$. If $f$ is holomorphic at every point of $U$, then $f$ is said to be \emph{holomorphic} on $U$.

If $M$ and $N$ are Banach spaces over $\R$ and $f:U \rightarrow N$ satisfy the definition above on the point $a \in U$, we will say that $f$ is \emph{real analytic at $a$}.

We introduce the following notation:
$$U(a,b) := \{ z \in \C \colon a+zb \in U  \} .$$

\begin{definition}
	A map $f \colon U \to N$ is said to be \emph{G\^ateux holomorphic} (or \emph{G-holomorphic}) if for every $a \in U$ and for every $b \in M$, the map
	$$
	z \mapsto f(a+zb)
	$$
	is holomorphic on $U(a,b) \subset \C$.
\end{definition}




It is clear that every holomorphic map is also $G$-holomorphic. However the converse in general is not true when $M$ is infinite dimensional. The following theorem provides a criteria for when the converse holds.

\begin{theorem} \cite[Chapter 14]{CB85} \label{Graves-Taylor-Hille-Zorn}
	Let $U$ be an open subset of a Banach space and let $f \colon U \to N$. The following are equivalent:
	\begin{enumerate}
		\item[(i)] $f$ is holomorphic on $U$.
		\item[(ii)] $f$ is G-holomorphic and continous on $U$.
	\end{enumerate}
\end{theorem}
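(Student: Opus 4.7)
The plan is to prove the two directions separately; $(i) \Rightarrow (ii)$ is routine while $(ii) \Rightarrow (i)$ is the substantive Graves--Taylor--Hille--Zorn content. For $(i) \Rightarrow (ii)$: by hypothesis, around each point $a \in U$ there is a power series $f(x) = \sum_n T_n (x-a)^n$ convergent on some ball $B(a,r) \subset U$, so continuity follows from uniform convergence on $B(a,r/2)$. For any $b \in M$ and any $a_0 \in U(a,b)$, the scalar map $z \mapsto f(a_0 + zb)$ admits a power series expansion by applying the holomorphy of $f$ at $a_0 + zb$, hence is holomorphic on all of $U(a_0, b)$.

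For $(ii) \Rightarrow (i)$, fix $a \in U$ and choose $r > 0$ with $\overline{B(a, 2r)} \subset U$. By continuity of $f$ let $C := \sup\{\norm{f(x)} : \norm{x - a} \leq 2r\} < \infty$. For every $b \in M$ with $\norm{b} \leq r$, $G$-holomorphy makes $\phi_b(z) := f(a+zb)$ holomorphic on the disc $\{|z| < 2\}$, so the scalar Cauchy formula
$$
c_n(b) := \frac{1}{2\pi i} \oint_{|z|=1} \frac{f(a+zb)}{z^{n+1}} \, dz
$$
yields a vector coefficient $c_n(b) \in N$ with $\norm{c_n(b)} \leq C$, continuous in $b$ (by continuity of $f$ and dominated convergence), and the expansion $f(a+b) = \sum_{n \geq 0} c_n(b)$ holds. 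A rescaling $w = \lambda z$ inside the integral gives the homogeneity $c_n(\lambda b) = \lambda^n c_n(b)$ for every $\lambda \in \C$.

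The main obstacle is to promote $c_n$ from a continuous $n$-homogeneous map into the diagonal of a continuous symmetric $n$-linear form $T_n$. I would do this by freezing two directions $b_1, b_2$ and considering $(z, w) \mapsto f(a + z b_1 + w b_2)$, which is separately holomorphic (by $G$-holomorphy along $b_1$ and $b_2$) and jointly continuous, hence jointly holomorphic on a polydisc in $\C^2$ by the finite-dimensional Hartogs theorem. Expanding in a double Taylor series and specialising $w = t z$ shows that $c_n(b_1 + t b_2)$ is a polynomial in $t$ of degree at most $n$ whose coefficients are $(j,n-j)$-homogeneous in $(b_1,b_2)$. Iterating with $n$ free parameters produces the symmetric $n$-linear structure, made explicit by the polarization identity
$$
T_n(b_1, \ldots, b_n) := \frac{1}{n!\, 2^n} \sum_{\epsilon_1, \ldots, \epsilon_n \in \{\pm 1\}} \epsilon_1 \cdots \epsilon_n \, c_n(\epsilon_1 b_1 + \cdots + \epsilon_n b_n);
$$
continuity of $T_n$ in each variable follows from the uniform bound $\norm{c_n(b)} \leq C$ on $\norm{b} \leq r$, which is the Banach-space analogue of Cauchy's inequality. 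Finally, $\sum_n T_n(x-a)^n = f(x)$ on $B(a,r)$ by the uniqueness of the scalar expansion of $\phi_{x-a}$ evaluated at $z=1$, completing the construction.
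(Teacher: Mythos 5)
The paper does not supply a proof of this statement; it is cited directly from \cite[Chapter 14]{CB85} as the classical Graves--Taylor--Hille--Zorn theorem. So there is no internal argument to compare against, and your proposal must be judged on its own. Your route is indeed the standard one for $(ii)\Rightarrow(i)$: Cauchy integral coefficients along complex lines, $n$-homogeneity, passage through two-variable holomorphy to detect the polynomial structure, and polarization to extract the symmetric $n$-linear map $T_n$, with Cauchy estimates giving both continuity of $T_n$ and convergence of the series.

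There is, however, one genuine slip at the very start of $(ii)\Rightarrow(i)$. You fix $r$ so that $\overline{B(a,2r)}\subset U$ and then assert ``by continuity of $f$ let $C:=\sup\{\|f(x)\|:\|x-a\|\le 2r\}<\infty$.'' In an infinite-dimensional Banach space the closed ball $\overline{B(a,2r)}$ is \emph{not} compact, so continuity of $f$ on that ball does not imply boundedness there. What continuity actually gives is \emph{local} boundedness at $a$: pick $\delta>0$ with $\|f(x)-f(a)\|<1$ whenever $\|x-a\|<\delta$, and then shrink $r$ so that $2r<\delta$; only after this shrinkage is $C<\infty$. The rest then goes through as you describe: $\|c_n(b)\|\le C$ for $\|b\|\le r$ by the Cauchy estimate, continuity of $b\mapsto c_n(b)$ from uniform continuity of $f$ on the compact circle $\{a+zb_0:|z|=1\}$ (not merely ``dominated convergence''), homogeneity $c_n(\lambda b)=\lambda^n c_n(b)$ by deforming the contour, and joint holomorphy in several complex parameters --- here Osgood's lemma (separate holomorphy plus joint continuity) already suffices, the full Hartogs theorem is overkill. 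The iteration to $n$ free directions and the verification that the polarized $T_n$ is genuinely multilinear are sketched rather than carried out, but they are standard and I see no obstruction; with the boundedness repair the argument is essentially correct.
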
 

The \emph{variation of a complex measure} $\mu$ is defined as
$$ |\mu|(E) := \sup_{\pi} \sum_{A \in \pi} |\mu(A)|  $$
where the supremum is taken over all partitions $\pi$ of a measurable set $E$ into a countable number of disjoint measurable sets. An important property is that the variation of a complex measure is also a measure, whose values are defined on the set of nonnegative extended real numbers.

Another characterization of the variation of a complex measure is the following:

\begin{equation*}
	|\mu|(E) = \sup \left \{ \left |\int_{E} f(g) \; d\mu (g) \right | : f \in L^{\infty} (\mu) \text{ and } \norm{f}_{\infty} \le 1 \right \}.
\end{equation*}

Note that if $f \in L^1 (\mu)$ and $E$ is a measurable set, then:

\begin{equation*}
	\left | \int_E f(x) \; d\mu (x) \right | \le \int_E |f(x)| \; d|\mu| (x).
\end{equation*}


Let $\Sigma$ be a compact metric space. The total variation of a complex measure is defined as $\norm{\mu} \colon = |\mu|(\Sigma)$. If a measure satisfies $\norm{\mu} < \infty$, then we say that $\mu$ is finite or that is of bounded variation. In this case, $|\mu|$ is a finite nonnegative real valued measure.

We will consider $\Sigma$ to be a compact (but possibly infinite) space of symbols. We denote by $\MM (\Sigma)$ the set of complex valued measures over $\Sigma$ with bounded variation. The set $\MM(\Sigma)$, endowed with the total variation norm, will play the role of the Banach space $M$.

In this paper, we consider slightly more general definitions of holomorphy and G-holomorphy, in which the domain could also be a translation of a Banach subspace. The following construction shows how one can transfer the holomorphic structure from Banach spaces to affine subspaces via translation.

Let $V \subset M$ be a closed subspace, $v_0 \in M$ and consider a closed affine subspace $V_0 = V + v_0$ 
of $M$. Let $U_0 \subset V_0$ be an open set of $V_0$. We consider a function $f_0: U_0 \rightarrow N$ to be holomorphic (G-holomorphic) at $x_0 \in U_0$ if there exists a function $f\colon U=U_0 - v_0 \rightarrow N$ which is holomorphic (G-holomorphic) at $x_0 - v_0$, such that $f (x) = f_0 (x+v_0)$ for every $x\in U$. Moreover, if $f$ is holomorphic (G-holomorphic) in every point of its domain, then so is $f_0$. We will call $V_0$ an \textit{affine Banach space}.



It is then immediate that Theorem \ref{Graves-Taylor-Hille-Zorn} also holds in this context.

\subsection{Markov operators and convolution of measures}

%
%
%
%

Next, we present the definition of the convolution of measures in a more general setting for measures on groups.

\begin{definition}
	Let $G$ be a topological group that acts on a Polish space $S$. Let $\mu$ be a measure in $G$ and $\nu$ be a measure in $S$. Then we define the convolution of $\mu$ and $\nu$ as the measure $\mu*\nu$ on $S$ such that
	$$
	(\mu*\nu)(E) = \int_G \int_S \mathrm{1}_E(gx) \, d\nu(x)\, d\mu(g)
	$$for every measurable set $E \subset S$. By standard arguments of measure theory, in the same context,
	$$
	\int_S f(x) \, d(\mu*\nu)(x) = \int_G \int_S f(gx) \, d\nu(x)\, d\mu(g)
	$$
	for every measurable bounded function $f:S \rightarrow \C$.

\end{definition}

Given $k\geq 2$ and a measure $\mu \in \Prob(G)$, we define
$$
\mu^{*k} := \mu*\cdots *\mu \quad (k\, \text{times)}
$$
the $k$-th convolution of $\mu$ with itself.

Let us make three observations about the convolutions of measures. First, the convolution is associative, which makes $\mu^{*k}$ well defined. 
Second, it is a known result that the expression of the iterates of the cocycle $A^n (\omega)$ has distribution $\mu^{*n}$. Lastly, it should be noted that $\supp \; \mu^{*n} \not = \supp \; \mu^{*m}$ for $n \not= m$ in general. Indeed, even if $\mu = \delta_A$, for $A \in \GL_d (\R)$, we have that $\mu^{*k} = \delta_{A^k}$ for every $k \in \N$, which are generally different measures for different values of $k$. Instead, the convolution tends to ``spread out'' the support of the measure, making it contain matrices with arbitrarily big and small norms in most cases. In fact, it is clear from the definition of Lyapunov exponents in terms of convolutions that if the support of $\mu^{*n}$ is bounded from above (that is, it exists $K>0$ such that $\norm{A} \le K$ for every $A \in \supp \; \mu^{*n}$, for every $n \in \N$), then $L_1 (\mu) \le 0$. However, even though the support of a measure is not preserved through convolution, another important property holds: if the support of $\mu$ is a compact set in $\GL_d (\R)$, then $\supp \; \mu^{*n}$ is also a compact set in $\GL_d (\R)$ for every $n \in \N$ (moreover, the upper and lower bound of those sets are related). These observations will be used implicity throughout the text.

Let $\mu \in \MM(\Sigma)$. Consider the  operator $Q_\mu \colon L^\infty (\Proj) \to L^\infty (\Proj)$ associated to $\mu$ as follows:
$$
Q_\mu(\varphi)(\hat{v}) = \int_{\Sigma} \varphi(\hat{g}\hat{v}) \; d\mu(g),
$$
where $\hat{g}\colon \Proj \to \Proj$ is the projective action of $g$ defined by $\widehat{gv} =\hat{g}\hat{v}$ and $\Proj$ denote the projective space of dimension $d$ over $\R$. When $\mu \in \Prob(\Sigma)$, the operator $Q_\mu$ is called the Markov operator.
Note that the Markov operator $Q_\mu$ is a positive, bounded, linear operator that preserves constant functions. A general reference for this and related concepts in this section is \cite{DK-CBM}.

We may also consider small perturbations of $\mu \in \Prob(\Sigma)$ given by complex measures $\nu \in \MM(\Sigma)$ and their associated operators $Q_\nu$.
Although $Q_\mu$ is a Markov operator, $Q_{\nu}$ may not share the same properties. For instance, it does not fix constants when $\nu$ is not a probability measure. However, in this work we show that many results from the theory of Markov operators from \cite{BD} and \cite{DK-book} still hold for their complex analogues.

Moreover, we say that a measure $\eta \in \Prob(\Proj)$ is stationary with respect to $\mu$, or $\mu$-stationary, if it satisfies:
$$
\eta(B) = \int \eta(\hat{g}^{-1}(B))\, d\mu(g)
$$
for every measurable set $B \subset \Proj$. 
It turns out that a measure $\eta$  is $\mu$-stationary if, and only if, for every continuous function $\varphi \in C(\Proj)$,
$$
\int_{\Proj} Q_{\mu}(\varphi)\, d\eta = \int_{\Proj} \varphi\, d\eta.
$$

\begin{lemma} \label{Qop equality}
	Let $\mu \in \MM (\Sigma)$ be a complex measure with bounded variation. Then,
	$$
	Q_{\mu^{*n}} = Q^n_{\mu}.
	$$ 
\end{lemma}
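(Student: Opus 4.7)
The plan is to first establish the composition law $Q_{\mu*\nu} = Q_\mu \circ Q_\nu$ for any two complex measures $\mu, \nu \in \MM(\Sigma)$, and then deduce the lemma by a one-line induction on $n$.

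First I would extend the integration formula
$$
\int_\Sigma f(k)\, d(\mu * \nu)(k) \;=\; \int_\Sigma \int_\Sigma f(gh)\, d\nu(h)\, d\mu(g)
$$
from the probability setting stated in the excerpt to the case of complex measures of bounded variation. This is standard: using the Jordan decomposition $\mu = (\mu_1 - \mu_2) + i(\mu_3 - \mu_4)$ and $\nu = (\nu_1 - \nu_2) + i(\nu_3 - \nu_4)$ with finite positive measures $\mu_j, \nu_j$, one defines $\mu*\nu$ by bilinear extension and obtains the formula by linearity. Fubini's theorem applies because $|\mu|\otimes|\nu|$ is a finite positive measure on $\Sigma \times \Sigma$ and $f$ bounded; as a byproduct one gets $\|\mu*\nu\| \leq \|\mu\|\,\|\nu\|$, so $\mu^{*n}$ is a well-defined element of $\MM(\Sigma)$ with $\|\mu^{*n}\| \leq \|\mu\|^n$.

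Next, I fix $\varphi \in L^\infty(\Proj)$ and $\hat v \in \Proj$ and apply the formula to the bounded measurable function $k \mapsto \varphi(\hat k \hat v)$:
$$
Q_{\mu*\nu}(\varphi)(\hat v) \;=\; \int_\Sigma \varphi(\hat k \hat v)\, d(\mu*\nu)(k) \;=\; \int_\Sigma \int_\Sigma \varphi(\widehat{gh}\,\hat v)\, d\nu(h)\, d\mu(g).
$$
Since the projective action is a group homomorphism, $\widehat{gh} = \hat g \circ \hat h$, so the inner integral equals $Q_\nu(\varphi)(\hat g \hat v)$, and the outer integral becomes $Q_\mu(Q_\nu \varphi)(\hat v)$. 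Hence $Q_{\mu*\nu} = Q_\mu \circ Q_\nu$ as operators on $L^\infty(\Proj)$.

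The induction closes the proof: assuming $Q_{\mu^{*n}} = Q_\mu^n$, associativity of convolution gives $\mu^{*(n+1)} = \mu * \mu^{*n}$, and applying the composition law yields $Q_{\mu^{*(n+1)}} = Q_\mu \circ Q_{\mu^{*n}} = Q_\mu \circ Q_\mu^n = Q_\mu^{n+1}$. The only mildly technical step is the Fubini/linearity extension to complex measures, but since all total variations are finite and all integrands bounded, no serious obstacle arises; the heart of the argument is simply that the projective action factors through multiplication in $\GL_d(\R)$.
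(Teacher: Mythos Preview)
Your proof is correct and follows essentially the same route as the paper: both rest on the convolution integration formula $\int f\,d(\mu*\nu) = \int\!\!\int f(gh)\,d\nu(h)\,d\mu(g)$ together with $\widehat{gh} = \hat g \circ \hat h$, and both close by induction. The only organizational difference is that you first isolate the two-measure identity $Q_{\mu*\nu} = Q_\mu \circ Q_\nu$ and then induct in one line, whereas the paper writes out the $n=2$ case and the inductive step directly as iterated integrals; you are also more explicit about why the integration formula extends to complex measures via Jordan decomposition, a point the paper leaves implicit.
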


\begin{proof}
	The proof proceeds by induction. 
\end{proof}

Given $\hat{p}, \hat{q} \in \Proj$, denote by $\delta\colon \Proj \times \Proj\to [0,\infty)$ the projective distance on $\Proj$:
\begin{equation} \label{projective distance}
\delta(\hat{p}, \hat{q}) := \frac{\|p \wedge q\|}{\|p\| \|q\|}
\end{equation}

\begin{definition} \label{def k alpha}
	Let $\mu \in \MM (\Sigma)$ be a complex measure of bounded variation and $0< \alpha< 1$. We define the average $\alpha$-H\"older constant of the projective action $\hat{g_0} \colon \Proj \to \Proj$ by
	$$
	k_\alpha(\mu) := \sup_{v_1 \neq v_2} \int_{\Sigma} \left ( \frac{\delta( \hat{g}_0\hat{v}_1  , \hat{g}_0\hat{v}_2  )}{\delta(\hat{v}_1, \hat{v}_2)} \right )^\alpha \; d| \mu |(g_0).
	$$
\end{definition}

Notice that, since $\supp \; \mu \subseteq \Sigma$, one can remove the domain of integration $\Sigma$, or choose any other compact set that contains the support of $\mu$ on the definition above, without affecting the value of $k_\alpha (\mu)$. This fact is important since we wish to understand the behaviour of $k_\alpha (\mu^{*n})$ as $n$ increases and, as previously stated, $\supp \; \mu^{*n} \not \subseteq \Sigma$ for large $n$.

Given $\mu, \nu \in \mathcal{M} (\Sigma)$, we denote $|\nu| \le |\mu|$ if $| \nu | (E) \le | \mu |(E)$ for every measurable set $E$.
%
%
%

\begin{lemma} For every two complex measures $\mu, \nu \in \MM (\Sigma)$, it holds that $| \mu*\nu | \leq | \mu |*|\nu|$. In particular, for every $\varphi \in L^\infty (\mu^{*n})$, we have that $\left | \int \varphi \; d\mu^{*n} \right | \leq \int |\varphi | \; d|\mu^{*n}| \leq \int |\varphi | \; d|\mu|^{*n}$.
\end{lemma}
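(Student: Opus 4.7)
The plan is to first establish the measure-theoretic comparison $|\mu*\nu|\le |\mu|*|\nu|$ (reading the right-hand side of the lemma as the convolution of the variation measures), and then derive the two integral inequalities as immediate consequences.

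For the measure inequality, fix a Borel set $E\subset \Sigma$ and apply the scalar bound $\left|\int f\,d\mu\right|\le\int|f|\,d|\mu|$ recalled just before the lemma, first to the inner integration against $\nu$ and then to the outer integration against $\mu$. This yields
$$
|(\mu*\nu)(E)|
=\left|\int_\Sigma\int_\Sigma \mathbbm{1}_E(gx)\,d\nu(x)\,d\mu(g)\right|
\le \int_\Sigma\int_\Sigma \mathbbm{1}_E(gx)\,d|\nu|(x)\,d|\mu|(g)
=(|\mu|*|\nu|)(E).
$$
Given any measurable partition $\{E_j\}$ of $E$, summing the above estimate over $j$ and taking the supremum over partitions produces $|\mu*\nu|(E)\le (|\mu|*|\nu|)(E)$, so the claimed measure inequality holds. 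Setting $E=\Sigma$ recovers the total-variation bound $\|\mu*\nu\|\le \|\mu\|\,\|\nu\|$, and a straightforward induction on $n$ using associativity of convolution then gives $|\mu^{*n}|\le |\mu|^{*n}$ as positive measures on $\Sigma$.

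The ``in particular'' statement is immediate from here: the first inequality $|\int \varphi\,d\mu^{*n}|\le \int |\varphi|\,d|\mu^{*n}|$ is the standard scalar bound applied to the complex measure $\mu^{*n}$, and the second inequality follows from monotonicity of integration of the non-negative function $|\varphi|$ against the ordered pair of positive measures $|\mu^{*n}|\le |\mu|^{*n}$. No deep obstacle arises; the only point requiring care is to justify the double application of the scalar inequality, which is legitimate because the inner integral $g\mapsto \int_\Sigma \mathbbm{1}_E(gx)\,d\nu(x)$ is a bounded (by $\|\nu\|$) measurable function of $g$, hence lies in $L^1(|\mu|)$ and the outer scalar bound applies directly.
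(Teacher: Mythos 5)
Your proof is correct and uses essentially the same mechanism as the paper's: apply the scalar bound $\left|\int f\,d\mu\right|\le\int|f|\,d|\mu|$ twice, once for each integration in the iterated integral defining the convolution, and then pass from a pointwise estimate to an estimate on the variation measure. The only cosmetic difference is that you test against indicator functions $\mathbbm{1}_E$ and invoke the partition definition of $|\mu*\nu|$, whereas the paper tests against general $\varphi\in L^\infty$ with $\|\varphi\|_\infty\le 1$ and invokes the supremum characterization of the variation recalled just before the lemma; these two routes from the iterated-integral inequality to the measure inequality are interchangeable. Your explicit note that $|\mu^{*n}|\le|\mu|^{*n}$ follows by induction (using monotonicity of convolution against the positive measure $|\mu|$) spells out what the paper compresses into ``applying the inequality above multiple times.''
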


\begin{proof}
	Let $\varphi \in L^{\infty} (\mu * \nu)$ and $E$ be a measurable set. Then:
	\begin{align*}
		\left |\int_{E} \varphi(x) \; d \mu*\nu (x) \right | &= \left | \int_{\Sigma} \int_{g^{-1}(E)} \varphi(gx) \; d \nu (x) d\mu (g) \right | \\
		&\le \int_{\Sigma} \int_{g^{-1}(E)} |\varphi(gx)| \; d |\nu| (x) d|\mu| (g) \\
		&= \int_{E} |\varphi(x)| \; d |\mu|*|\nu| (x).
	\end{align*}
	By restricting it to $\norm{\varphi}_{\infty} \le 1$ and taking the supremum on both sides, it follows that $|\mu * \nu|(E)\le |\mu |*|\nu| (E)$. Since $E$ is arbitrary, it follows that $|\mu*\nu| \le |\mu|*|\nu|$. Moreover, applying the inequality above multiple times with $\nu = \mu$ concludes the result.
\end{proof}

As a consequence of the previous lemma, if $\mu$ is a compactly supported measure of bounded variation, then $\mu^{*n}$ is also a compactly supported measure of bounded variation, for every $n \in \N$. In particular, the value $k_{\alpha} (\mu^{*n})$ is well defined for every $n \in \N$.


\begin{lemma} \label{sub multiplicative Kalpha}
	The sequence $k_\alpha(\mu^{*n})$ is sub-multiplicative:
	$$
	k_\alpha(\mu^{*(m+n)}) \leq k_\alpha(\mu^{*m})\, k_\alpha(\mu^{*n}).
	$$
\end{lemma}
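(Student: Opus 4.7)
The plan is to exploit (i) the associativity of convolution, (ii) the multiplicativity of the projective action $\widehat{g_1 g_2} = \hat g_1 \circ \hat g_2$, and (iii) the sub-multiplicativity of the variation proved in the preceding lemma. The projective distance $\delta$ is a genuine distance on $\Proj$, and the crucial observation is that the distortion ratio appearing in the definition of $k_\alpha$ behaves as a multiplicative cocycle: for any $g_1,g_2\in\Sigma$ and $\hat v_1\neq \hat v_2$,
$$
\frac{\delta(\widehat{g_1g_2}\hat v_1,\widehat{g_1g_2}\hat v_2)}{\delta(\hat v_1,\hat v_2)}
=\frac{\delta(\hat g_1\hat w_1,\hat g_1\hat w_2)}{\delta(\hat w_1,\hat w_2)}\cdot\frac{\delta(\hat g_2\hat v_1,\hat g_2\hat v_2)}{\delta(\hat v_1,\hat v_2)},
$$
where $\hat w_i:=\hat g_2\hat v_i$ (note $\hat w_1\neq \hat w_2$ because $\hat g_2$ is a bijection of $\Proj$).

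First I would write $\mu^{*(m+n)}=\mu^{*m}*\mu^{*n}$ by associativity, and apply the previous lemma to obtain $|\mu^{*(m+n)}|\le |\mu^{*m}|*|\mu^{*n}|$ as measures on $\Sigma$. Since the integrand in the definition of $k_\alpha$ is non-negative, this yields
$$
k_\alpha(\mu^{*(m+n)})\le \sup_{\hat v_1\neq \hat v_2}\int_\Sigma\Bigl(\tfrac{\delta(\hat g\hat v_1,\hat g\hat v_2)}{\delta(\hat v_1,\hat v_2)}\Bigr)^{\!\alpha}\,d\bigl(|\mu^{*m}|*|\mu^{*n}|\bigr)(g).
$$
Next I would unfold the convolution on the right via the definition given in the preliminaries (with $G=\GL_d(\R)$ acting on itself by left multiplication) to rewrite the integral as a double integral in $g_1,g_2$ with integrand $\bigl(\delta(\widehat{g_1g_2}\hat v_1,\widehat{g_1g_2}\hat v_2)/\delta(\hat v_1,\hat v_2)\bigr)^\alpha$ against $d|\mu^{*m}|(g_1)\,d|\mu^{*n}|(g_2)$.

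Then I would apply the telescoping identity above, so that the integrand factors as a product of two non-negative expressions, one depending on $(g_1,\hat w_1,\hat w_2)$ and the other on $(g_2,\hat v_1,\hat v_2)$. Fubini–Tonelli lets me integrate first in $g_1$ with $g_2$ fixed: for each fixed $g_2$, the inner integral is at most $k_\alpha(\mu^{*m})$ by definition, uniformly in the points $\hat w_1\neq \hat w_2$. Pulling this constant out, the remaining integral in $g_2$ is bounded, uniformly in $\hat v_1\neq \hat v_2$, by $k_\alpha(\mu^{*n})$, and taking the supremum in $\hat v_1\neq\hat v_2$ gives the desired inequality
$$
k_\alpha(\mu^{*(m+n)})\le k_\alpha(\mu^{*m})\,k_\alpha(\mu^{*n}).
$$

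There is no serious obstacle; the only delicate point is bookkeeping around the absolute values of measures, namely using $|\mu^{*(m+n)}|\le|\mu^{*m}|*|\mu^{*n}|$ rather than the naive $|\mu|^{*(m+n)}$, so that the right-hand side is expressed directly in terms of $k_\alpha(\mu^{*m})$ and $k_\alpha(\mu^{*n})$ (which are defined using $|\mu^{*m}|$ and $|\mu^{*n}|$ respectively). Everything else is a cocycle-style telescoping argument and an application of Fubini to the positive integrand.
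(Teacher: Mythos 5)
Your proof is correct and follows essentially the same route as the paper's: bound $|\mu^{*(m+n)}|$ by the convolution $|\mu^{*m}|*|\mu^{*n}|$, unfold it into a double integral, telescope the distortion ratio, and peel off the two $k_\alpha$ factors by integrating one variable at a time (Tonelli applies since the integrand is non-negative). The only cosmetic difference is the order in which you integrate the two blocks, which is immaterial.
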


\begin{proof}
	\begin{align*}
		&k_\alpha(\mu^{(m+n)}) = \sup_{v_1 \neq v_2} \int \left ( \frac{\delta( \hat{g}\hat{v}_1  , \hat{g}\hat{v}_2  )}{\delta(\hat{v}_1, \hat{v}_2)} \right )^\alpha \; d| \mu^{*(m+n)} |(g) \\
		&\le \sup_{v_1 \neq v_2} \int \left ( \frac{\delta( \hat{g}\hat{v}_1  , \hat{g}\hat{v}_2  )}{\delta(\hat{v}_1, \hat{v}_2)} \right )^\alpha \; d| \mu^{*n}|*|\mu^{*m} |(g) \\
		&= \sup_{v_1 \neq v_2} \int \int \left ( \frac{\delta( \hat{g_2} \hat{g_1}\hat{v}_1  , \hat{g_2} \hat{g_1}\hat{v}_2  )}{\delta(\hat{v}_1, \hat{v}_2)} \right )^\alpha \; d|\mu^{*m} |(g_1) d |\mu^{*n}| (g_2)\\
		&= \sup_{v_1 \neq v_2} \int \int \left ( \frac{\delta( \hat{g_2} \hat{g_1}\hat{v}_1  , \hat{g_2} \hat{g_1}\hat{v}_2  )}{\delta(\hat{g_1}\hat{v}_1, \hat{g_1}\hat{v}_2)} \cdot \frac{\delta( \hat{g_1}\hat{v}_1  , \hat{g_1}\hat{v}_2  )}{\delta(\hat{v}_1, \hat{v}_2)} \right )^\alpha d|\mu^{*m} |(g_1) d |\mu^{*n}| (g_2)\\
		&\le \sup_{v_1 \neq v_2} \int \left (\frac{\delta( \hat{g_1}\hat{v}_1  , \hat{g_1}\hat{v}_2  )}{\delta(\hat{v}_1, \hat{v}_2)} \right )^\alpha k_{\alpha} (\mu^{*n}) d |\mu^{*m}| (g_1)
		=k_{\alpha} (\mu^{*m}) k_{\alpha} (\mu^{*n}).
	\end{align*}

\end{proof}

Given $\varphi \in L^\infty(\Proj)$ and $0 \leq \alpha \leq 1$ we define
$$
v_\alpha(\varphi) := \sup_{\hat{v}_1 \neq \hat{v}_2}  \frac{| \varphi(\hat{v}_1) - \varphi(\hat{v}_2) |}{\delta( \hat{v}_1 , \hat{v}_2 )^\alpha}  .
$$

If $v_{\alpha}(\varphi) < \infty$ then $\varphi$ is $\alpha$-H\"older continuous. Let $C^\alpha(\Proj)$ be the space of all H\"older continuous functions, which we endowed with its natural norm $\norm{.}_\alpha = \norm{.}_\infty +v_\alpha(.)$. 

Note that $v_0 (\varphi)= \sup_{\hat{v}_1 \neq \hat{v}_2}  | \varphi(\hat{v}_1) - \varphi(\hat{v}_2) | \leq v_\alpha(\varphi)$ for every $\alpha >0$.

\begin{lemma} \label{Q contracts Valpha}
	For every $n \geq 1$, $\mu \in \MM (\Sigma)$ and $\varphi \in C^\alpha(\Proj)$, the following inequality holds:
	$$
	v_\alpha(Q_{\mu}^n(\varphi)) \leq k_\alpha(\mu^{*n})\, v_\alpha(\varphi).
	$$
\end{lemma}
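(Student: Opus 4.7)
The plan is to combine Lemma \ref{Qop equality}, which gives $Q_\mu^n=Q_{\mu^{*n}}$, with the defining inequality for the variation of a complex measure, so that the entire estimate reduces to a single application of the H\"older property of $\varphi$.

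Concretely, I would start from the identity
\[
Q_\mu^n(\varphi)(\hat v) \;=\; Q_{\mu^{*n}}(\varphi)(\hat v) \;=\; \int_\Sigma \varphi(\hat g \hat v)\, d\mu^{*n}(g),
\]
so that for any $\hat v_1\neq \hat v_2\in\Proj$,
\[
Q_\mu^n(\varphi)(\hat v_1)-Q_\mu^n(\varphi)(\hat v_2) \;=\; \int_\Sigma\bigl[\varphi(\hat g\hat v_1)-\varphi(\hat g\hat v_2)\bigr]\,d\mu^{*n}(g).
\]
Using the inequality $\bigl|\int f\,d\mu^{*n}\bigr|\leq \int|f|\,d|\mu^{*n}|$ recalled in the preceding subsection, and then the $\alpha$-H\"older bound $|\varphi(\hat g\hat v_1)-\varphi(\hat g\hat v_2)|\leq v_\alpha(\varphi)\,\delta(\hat g\hat v_1,\hat g\hat v_2)^\alpha$, I obtain
\[
\bigl|Q_\mu^n(\varphi)(\hat v_1)-Q_\mu^n(\varphi)(\hat v_2)\bigr| \;\leq\; v_\alpha(\varphi)\int_\Sigma \delta(\hat g\hat v_1,\hat g\hat v_2)^\alpha\,d|\mu^{*n}|(g).
\]

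Dividing both sides by $\delta(\hat v_1,\hat v_2)^\alpha$ and pulling the denominator inside the integral yields
\[
\frac{\bigl|Q_\mu^n(\varphi)(\hat v_1)-Q_\mu^n(\varphi)(\hat v_2)\bigr|}{\delta(\hat v_1,\hat v_2)^\alpha} \;\leq\; v_\alpha(\varphi)\int_\Sigma\left(\frac{\delta(\hat g\hat v_1,\hat g\hat v_2)}{\delta(\hat v_1,\hat v_2)}\right)^\alpha d|\mu^{*n}|(g).
\]
Taking the supremum over $\hat v_1\neq \hat v_2$ on both sides, the right-hand side is bounded by $v_\alpha(\varphi)\,k_\alpha(\mu^{*n})$ by the very definition of $k_\alpha$ (Definition \ref{def k alpha}) applied to $\mu^{*n}$, completing the proof.

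There is no real obstacle here: the argument is a textbook disintegration of the H\"older seminorm under a Markov-type averaging, and the only subtlety is the need to invoke $|\int f\,d\mu^{*n}|\leq \int|f|\,d|\mu^{*n}|$ so that the argument extends from probability measures to complex measures of bounded variation. Note that one does not need the sharper bound $|\mu^{*n}|\leq |\mu|^{*n}$ from the earlier lemma; that estimate only becomes relevant when one later wants to compare $k_\alpha(\mu^{*n})$ with $k_\alpha(|\mu|^{*n})$.
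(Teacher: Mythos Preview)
Your proposal is correct and follows essentially the same route as the paper: write $Q_\mu^n\varphi$ via Lemma~\ref{Qop equality} as an integral against $\mu^{*n}$, pass to $|\mu^{*n}|$, apply the H\"older bound for $\varphi$, and take the supremum to recover $k_\alpha(\mu^{*n})$. The paper's proof is line-for-line the same computation, only with the invocation of Lemma~\ref{Qop equality} left implicit.
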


\begin{proof}
	\begin{align*}
		\frac{| Q_{\mu}^n\varphi (\hat{v}_1) - Q_{\mu}^n\varphi (\hat{v}_2) |}{\delta(\hat{v}_1, \hat{v}_2)^\alpha}
		&= \frac{\left|  \int \varphi(\hat{g}\hat{v}_1)\; d\mu^{*n}(g) -  \int \varphi(\hat{g}\hat{v}_2)\; d\mu^{*n}(g) \right| }{\delta(\hat{v}_1, \hat{v}_2)^\alpha} \\
		&= \left| \int  \frac{  \varphi(\hat{g}\hat{v}_1) - \varphi(\hat{g}\hat{v}_2)  }{\delta(\hat{v}_1, \hat{v}_2)^\alpha} \; d\mu^{*n}(g) \right| \\
		& \leq \int \frac{   \left|  \varphi(\hat{g}\hat{v}_1) - \varphi(\hat{g}\hat{v}_2)  \right|    }{\delta(\hat{v}_1, \hat{v}_2)^\alpha} \; d | \mu^{*n} | (g)  \\
		& \leq  v_\alpha(\varphi)  \int \frac{    \delta( \hat{g}\hat{v}_1 , \hat{g}\hat{v}_2 )^\alpha     }{\delta(\hat{v}_1, \hat{v}_2)^\alpha} \; d | \mu^{*n} | (g).
	\end{align*}
	We conclude the lemma by applying the supremum in $\hat{v}_1 \neq \hat{v}_2$ to both sides.
\end{proof}

\section{Proof of Theorem \ref{analiticity TV Bernoulli}} \label{Bernoulli TV}
This section is divided into three parts. The first one is devoted to proving that the ideas in \cite{BD} still hold for complex measures. We show that the powers of the Markov operator $Q_\mu$ converges to a rank-one projection which, when $\mu$ is a probability measure, is the Lyapunov exponent. In the second part we use the concept of G\^ateux holomorphy to write the Markov operator as a polynomial. Therefore, we are able to use ideas of \cite{Pe91} to prove Theorem \ref{analiticity TV Bernoulli}. In the last part, we present some consequences of this result and we include one example that shows the importance of the compactness of the support of the measure.

\begin{subsection} {The convergence of the iterates of the Markov operator}

	
	
	A linear subspace $V \subset \R^d$ is called $\mu$-invariant if $g V = V$ for $\mu$-a.e. $g \in \GL_d(\R)$. One can restrict the cocycle to the subspace $V$ and consider the induced cocycle, with top Lyapunov exponent $L_1(\mu|_V)$. We call a probability measure $\mu$ {\em quasi-irreducible} if there is no proper $\mu$-invariant linear subspace $V \subset \R^d$ such that $L_1(\mu|_V) < L_1(\mu)$. 
	
	Note that the quasi-irreducibility of $\mu_0$ implies that Kifer's non random filtration (\cite[Chapter 3, Theorem 1.2]{Kifer-book}) is trivial, 
	that is, for every $v \in \R^d \backslash\{ 0 \}$ and $\mu_0^{\N}$-almost every $\{ g_n \}_n \in X$,
	$$
	\lim_{n\to \infty} \frac{1}{n}\log \norm{g_{n-1} \dots g_1g_0v} = L_1(\mu_0).
	$$
	
	Moreover, together with the hypothesis that $L_1(\mu_0) > L_2(\mu_0)$, a consequence of the previous fact is that 
	\begin{equation} \label{uniform convergence}
		\lim_{n\to \infty} \frac{1}{n} \int \log \norm{gv} d\mu_0^{*n}(g) = L_1(\mu_0),
	\end{equation}
	with uniform convergence in $\hat{v} \in \Proj$. For a proof of this consequence, see \cite[Proposition 5.2.2]{Duraes}.
	
	Note that, given $g \in \GL(d)$, $\norm{\wedge_2 g} = s_1 (g) s_2 (g)$, where $s_1(g)$ and $s_2(g)$ are the first and second singular values of a matrix.
	
	\begin{lemma} \label{desigualdade kalpha}
		For every $\mu \in \MM (\Sigma)$ and every $\alpha>0$,
		$$
		k_\alpha(\mu) \leq \sup_{ {\hat{v} \in \Proj}} \int_{\Sigma} \left({\frac{s_1( {g}) s_2( {g})}{\|{g} {v}\|^2}}\right)^\alpha \;d |\mu| (g).
		$$
	\end{lemma}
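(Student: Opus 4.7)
The plan is to combine the identity $\|gv_1 \wedge gv_2\| = \|(\wedge^2 g)(v_1\wedge v_2)\|$ with the operator norm bound $\|\wedge^2 g\| = s_1(g)s_2(g)$, and then use Cauchy--Schwarz with respect to $|\mu|$ to decouple the roles of $v_1$ and $v_2$.

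First, I would expand the ratio in the definition of $k_\alpha(\mu)$. Using the formula \eqref{projective distance} for the projective distance and taking $v_1, v_2$ to be unit representatives of $\hat v_1, \hat v_2$,
\begin{equation*}
\frac{\delta(\hat g \hat v_1, \hat g \hat v_2)}{\delta(\hat v_1, \hat v_2)}
= \frac{\|gv_1 \wedge gv_2\|}{\|gv_1\|\,\|gv_2\|} \cdot \frac{\|v_1\|\,\|v_2\|}{\|v_1 \wedge v_2\|}
= \frac{\|(\wedge^2 g)(v_1\wedge v_2)\|}{\|v_1\wedge v_2\|} \cdot \frac{1}{\|gv_1\|\,\|gv_2\|}.
\end{equation*}
Since $\|\wedge^2 g\|_{\rm op} = s_1(g)s_2(g)$ (the operator norm of the second exterior power equals the product of the two largest singular values of $g$), the first factor on the right is at most $s_1(g)s_2(g)$. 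Therefore, pointwise in $g$ and for $\hat v_1 \neq \hat v_2$,
\begin{equation*}
\left(\frac{\delta(\hat g \hat v_1, \hat g \hat v_2)}{\delta(\hat v_1, \hat v_2)}\right)^\alpha
\leq \left(\frac{s_1(g)s_2(g)}{\|g v_1\|\,\|g v_2\|}\right)^\alpha.
\end{equation*}

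Next, I would integrate both sides against $d|\mu|(g)$ and apply the Cauchy--Schwarz inequality (with respect to $|\mu|$) with $f(g) = (s_1 s_2)^{\alpha/2}/\|gv_1\|^\alpha$ and $h(g) = (s_1 s_2)^{\alpha/2}/\|gv_2\|^\alpha$:
\begin{equation*}
\int_\Sigma \frac{(s_1(g)s_2(g))^\alpha}{\|gv_1\|^\alpha \|gv_2\|^\alpha}\, d|\mu|(g)
\leq \left(\int_\Sigma \left(\frac{s_1(g)s_2(g)}{\|gv_1\|^2}\right)^\alpha d|\mu|(g)\right)^{1/2} \left(\int_\Sigma \left(\frac{s_1(g)s_2(g)}{\|gv_2\|^2}\right)^\alpha d|\mu|(g)\right)^{1/2}.
\end{equation*}
Each of the two factors on the right is bounded above by $\sup_{\hat v \in \Proj} \int_\Sigma \bigl(s_1(g)s_2(g)/\|gv\|^2\bigr)^\alpha d|\mu|(g)$, so the product is as well. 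Taking the supremum over $\hat v_1 \neq \hat v_2$ on the left then yields the claimed inequality for $k_\alpha(\mu)$.

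There is no serious obstacle here, as the argument is essentially a direct computation; the only mildly clever step is the Cauchy--Schwarz splitting, which is the standard device for decoupling two projective points so that the upper bound depends on a supremum over a \emph{single} direction $\hat v$ rather than over pairs.
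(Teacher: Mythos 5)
Your proof is correct, and the first half is identical to the paper's: expand the distortion ratio via the exterior power and bound $\|gv_1\wedge gv_2\|/\|v_1\wedge v_2\|$ by $s_1(g)s_2(g)$ (you are actually a bit more careful than the paper, which writes this as an equality even though in $d>2$ it is only an inequality via the operator norm of $\wedge^2 g$). The only difference is the decoupling step: the paper applies AM--GM pointwise, bounding $\dfrac{1}{\|gv_1\|^\alpha\|gv_2\|^\alpha}$ by $\dfrac{1}{2}\bigl(\|gv_1\|^{-2\alpha}+\|gv_2\|^{-2\alpha}\bigr)$ and then integrating, whereas you integrate first and then apply Cauchy--Schwarz with respect to $|\mu|$. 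Both are instances of the same idea and yield the identical final bound, so this is essentially the paper's argument with a cosmetic reshuffling of the inequality used to separate $\hat v_1$ from $\hat v_2$.
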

	
	\begin{proof} 
		
		

		
		Recall that
		$$
		\| gp \wedge gq \| \le s_1(g)s_2(g)\|p \wedge q\|.
		$$
		Hence, by (\ref{projective distance}), given $\alpha > 0$, two points $\hat{p}, \hat{q} \in \Proj$ and any $g \in \Sigma$, it holds that  
		\begin{align*}
			\left[ \frac{\delta(\hat{g} \hat{p}, \hat{g} \hat{q})}{\delta(\hat{p}, \hat{q})} \right] ^\alpha &\le \left[ (s_1(g)s_2(g))\frac{\|p\| \|q\|}{\|gp\| \|gq\|} \right]^\alpha \\
			&\leq \frac{\left[ s_1(g)s_2(g)\right]^\alpha}{2} \left[ \frac{1}{\|gp\|^{2\alpha}} +\frac{1}{\|gq\|^{2\alpha}}  \right]
		\end{align*}
where in the last inequality we use that the geometric mean is less or equal than the arithmetic mean.		
		
		Note that if we integrate with respect to the measure $|\mu|$ and take the supremum in $\hat{p} \neq \hat{q}$ on both sides of this inequality, we conclude the lemma.
	\end{proof}
	
	\begin{proposition} \label{contraction of Kalpha}
		Assume that $\mu_0 \in \Prob(\Sigma)$ is quasi-irreducible and $L_1(\mu_0)>L_2(\mu_0)$. Then, there exist $0< \alpha \leq 1$, $\theta >1$, $C>0$ and a neighbourhood $V \subset \MM (\Sigma)$ of $\mu_0$ with respect to the total variation distance, such that for every $n \in \N$ and for every $\mu \in V$,
		\begin{equation} \label{eq contraction of Kalpha Bernoulli}
			k_\alpha(\mu^{*n}) \leq C \theta^{-n}.
		\end{equation}
	\end{proposition}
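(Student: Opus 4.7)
The plan is three-fold: first establish the strict contraction $k_\alpha(\mu_0^{*N})\le\tau<1$ for a specific choice of constants $\alpha$ and $N$; then extend it to a neighbourhood $V$ of $\mu_0$ in total variation by continuity; finally propagate it to all $n$ using the sub-multiplicativity of Lemma \ref{sub multiplicative Kalpha}.

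For the contraction at $\mu_0$, I would apply Lemma \ref{desigualdade kalpha} to $\mu_0^{*N}$:
$$
k_\alpha(\mu_0^{*N}) \;\le\; \sup_{\hat v}\int_\Sigma\left(\frac{s_1(g)\,s_2(g)}{\|gv\|^2}\right)^{\!\alpha}\!d\mu_0^{*N}(g).
$$
Since $\Sigma$ is compact in $\GL_d(\R)$, the support of $\mu_0^{*N}$ is compact in $\GL_d(\R)$, so the logarithm of the integrand is bounded in absolute value by some $M_N=\Bigo(N)$. By Furstenberg--Kesten applied to $\wedge^2 g$, $\tfrac{1}{N}\int\log(s_1 s_2)\,d\mu_0^{*N}\to L_1(\mu_0)+L_2(\mu_0)$, while quasi-irreducibility together with (\ref{uniform convergence}) yields $\tfrac{1}{N}\int\log\|gv\|\,d\mu_0^{*N}\to L_1(\mu_0)$ uniformly in $\hat v$. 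Combining, $\tfrac{1}{N}\int\log\bigl(s_1 s_2/\|gv\|^2\bigr)\,d\mu_0^{*N}\to L_2(\mu_0)-L_1(\mu_0)<0$, uniformly in $\hat v$. I would then fix $N$ so that this average is at most $(L_2-L_1)/2<0$ uniformly in $\hat v$, and use the Taylor bound $x^\alpha \le 1+\alpha\log x + \tfrac{\alpha^2 M_N^2}{2}e^{\alpha M_N}$, valid on the range $|\log x|\le M_N$. Integrating and then choosing $\alpha$ small enough (of order $1/N$) so that the quadratic remainder is dominated by the negative linear drift yields
$$
k_\alpha(\mu_0^{*N}) \;\le\; 1 - \tfrac{\alpha N(L_1-L_2)}{4} \;=:\; \tau \;<\; 1.
$$

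For the extension to a neighbourhood, I would observe that the bound from Lemma \ref{desigualdade kalpha} is linear in $|\mu^{*N}|$ with an integrand uniformly bounded in $\hat v$ on the compact support. Since $\mu\mapsto\mu^{*N}$ is locally Lipschitz and $\nu\mapsto|\nu|$ is $1$-Lipschitz in the total variation norm, the map $\mu\mapsto k_\alpha(\mu^{*N})$ is continuous at $\mu_0$, so there exist a neighbourhood $V$ of $\mu_0$ and $\tau'\in(\tau,1)$ with $k_\alpha(\mu^{*N})\le\tau'$ for every $\mu\in V$. Finally, writing any $n=qN+r$ with $0\le r<N$ and iterating Lemma \ref{sub multiplicative Kalpha},
$$
k_\alpha(\mu^{*n}) \;\le\; k_\alpha(\mu^{*N})^q\,k_\alpha(\mu^{*r}) \;\le\; (\tau')^q \,C',
$$
where $C':=\sup_{\mu\in V,\,0\le r<N}k_\alpha(\mu^{*r})$ is finite by the same continuity argument. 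Setting $\theta:=(\tau')^{-1/N}>1$ and $C:=C'\theta^N$ gives exactly $k_\alpha(\mu^{*n})\le C\theta^{-n}$. The main obstacle is the balancing of $\alpha$ against $N$ in the Taylor step: $M_N$ grows linearly with $N$, so the quadratic remainder is $\Bigo(\alpha^2 N^2)$, and it must be forced below the useful linear drift of order $\alpha N$; fixing $N$ first and then shrinking $\alpha$ (both as specific constants at the end) resolves the issue.
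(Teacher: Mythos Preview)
Your proposal is correct and follows essentially the same route as the paper: bound $k_\alpha(\mu_0^{*N})$ via Lemma~\ref{desigualdade kalpha}, use the uniform convergence (\ref{uniform convergence}) together with Furstenberg--Kesten for $\wedge^2 g$ to make the logarithmic integrand strictly negative, apply the second-order bound $e^x\le 1+x+\tfrac{x^2}{2}e^{|x|}$ to force $k_\alpha(\mu_0^{*N})<1$ for small $\alpha$, then extend to a total-variation neighbourhood by continuity of the bounding integral and conclude by sub-multiplicativity. The only differences are cosmetic: your Euclidean-division bookkeeping and explicit Lipschitz remarks are spelled out more than in the paper, but the argument is the same.
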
 
	
	\begin{proof}
		We follow the argument in \cite{DK-book}. By equation (\ref{uniform convergence})
		it holds that
		$$
		\lim_{n\to \infty} \frac{1}{n}\int \log \norm{g \, v}^{-2} d\mu_0^{*n}(g) = -2 L_1(\mu_0),
		$$
		with uniform convergence in $v\in \mathbb{S}^{d-1}$. 
		
		Hence choosing $\epsilon>0$ and choosing $n$ sufficiently large, we have that
		$$
		\int \log \Vert g \, v \Vert ^{-2} d\mu_0^{*n}(g) \leq n (-2L_1(\mu_0) +\epsilon) .
		$$
		
		Given a matrix $g\in\GL_d(\R)$, let $\wedge_2 g \in \GL_{d \choose 2} (\R)$ denote the second exterior power of $g$. Note that 
		$\norm{\wedge_2 g} = s_1 (g) \, s_2 (g)$.
		
		Then for $n$ large enough and for all $\om \in \Sigma$ we have
		\begin{align*}
			\frac{1}{n}\int \log (s_1(g) \,  s_2(g)) d\mu_0^{*n}(g) &= 
			\frac 1 n \, \int \log \norm{(\wedge_2 \, g)} d\mu_0^{*n}(g) \\
			& \le L_1 (\wedge_2 \, g, \mu_0) + \ep = 
			L_1(\mu_0) + L_2(\mu_0) + \ep \, .
		\end{align*}
		
		Choosing $\ep = \frac{1}{4}(L_1(\mu_0) - L_2(\mu_0))>0$ and combining the previous estimates, for all $v\in \mathbb{S}^{d-1}$, we get
		\begin{align*}
			\int \log \left( \frac{s_1( g) s_2( g)}{ \Vert gv \Vert ^{2}} \right) d\mu_0^{*n}(g) &\le n (L_1(\mu_0) + L_2(\mu_0) + \ep) + n \, (-2 L_1 (\mu_0)+\ep) \\
			& = - n \,  (L_1(\mu_0) - L_2(\mu_0)-2\ep) = \\
			&= - \frac{n}{2} \,  (L_1(\mu_0) - L_2(\mu_0)) < -1 ,
		\end{align*} 
		since $L_1(\mu_0)>L_2(\mu_0)$ and provided that $n$ is large enough.
		
		Using the inequality 
		$
		\exp x  \leq 1+x+\frac{x^2}{2} \exp \abs{x},
		$
		we conclude that for every $v \in \mathbb{S}^{d-1}$ and $n$ large enough,
		\begin{align*}
			\int \left({\frac{s_1( g) s_2( g)}{\|{g} {v}\|^2}}\right)^\alpha d\mu_0^{*n}(g)  &=  \int  \exp \left( \alpha \log  \frac{ s_1( g) s_2( g)}{ \Vert gv \Vert ^{2}} \right) d\mu_0^{*n}(g)   \\
			&\kern-5em \le   1+ \int  \left( \alpha \log \frac{s_1( g) s_2( g)}{ \Vert gv \Vert ^{2}} \right) d\mu_0^{*n}(g)  \\
			&\kern-5em + \int  \left( \frac{\alpha^2}{2}\log^2  \frac{s_1( g) s_2(g)}{ \Vert gv \Vert ^{2}} \exp \abs{\frac{ \alpha \log  s_1( g) s_2( g)}{ \Vert gv \Vert ^{2}}}  \right) d\mu_0^{*n}(g) \\
			&\kern-5em \le  1- \alpha +C \frac{\alpha^2}{2} 
		\end{align*}
		for some finite constant $C$ that depends only on $\mu_0$ and $n$.
		
		Thus, fixing $n_0$ sufficiently large and considering $\alpha$ small enough, we conclude by Lemma~\ref{desigualdade kalpha} that
		$$
		k_{\alpha}(\mu_0^{*n_0}) \leq 1 - \alpha +C \frac{\alpha^2}{2} < 1.
		$$
		
		Note that for a fixed $n$, the quantity  $\int \left({\frac{s_1( g) s_2( g)}{\|g {v}\|^2}}\right)^\alpha \, d|\mu_0|^{*n}(g)$ which bounds from above $k_\alpha (\mu_0^{*n})$, depends continuously on the measure.  
		Then the previous inequality extends to a neighbourhood of $\mu_0$. There exists $\kappa <1$ and a neighbourhood $V \subset \mathcal{M}(\Sigma)$ of $\mu_0$ with respect to the total variation distance, such that $k_\alpha(\mu^{*n_0}) \leq \kappa <1$ for every $\mu \in V$.
		
		Because of the sub-multiplicative property of $k_\alpha$, we conclude that there exists $C>0$ and $\theta >1$, such that inequality~\ref{eq contraction of Kalpha Bernoulli} holds for every $n \in \N$.
	\end{proof}

	\begin{corollary} \label{contraction of Valpha}
		Assume that $\mu_0 \in \Prob(\Sigma)$ is quasi-irreducible and $L_1(\mu_0)>L_2(\mu_0)$. Then there exists $0< \alpha \leq 1$, $\theta >1$, $C>0$ and a neighbourhood $V \subset \MM (\Sigma)$ of $\mu_0$ with respect to the total variation distance, such that for every $n \in \N$, for every $\mu \in V$ and every $\varphi \in C^\alpha(\Proj)$,
		$$
		v_\alpha(Q_{\mu}^n \varphi) \leq C v_\alpha(\varphi) \theta^{-n}.
		$$ 
	\end{corollary}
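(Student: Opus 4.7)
The plan is to deduce the corollary immediately from the two results already established, with no additional work required. I would take the constants $\alpha \in (0,1]$, $\theta > 1$, $C > 0$ and the neighbourhood $V \subset \MM(\Sigma)$ of $\mu_0$ furnished by Proposition \ref{contraction of Kalpha}, so that
$$
k_\alpha(\mu^{*n}) \;\leq\; C\,\theta^{-n} \quad \text{for every } n \in \N \text{ and every } \mu \in V.
$$

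Given any $\varphi \in C^\alpha(\Proj)$, Lemma \ref{Q contracts Valpha} then yields the semi-norm estimate
$$
v_\alpha(Q_\mu^n \varphi) \;\leq\; k_\alpha(\mu^{*n})\, v_\alpha(\varphi) \;\leq\; C\,\theta^{-n}\, v_\alpha(\varphi),
$$
which is the desired bound. I read the statement as carrying the implicit factor $v_\alpha(\varphi)$ on the right-hand side, or equivalently as restricted to the unit ball $\{\varphi : v_\alpha(\varphi) \leq 1\}$; either interpretation gives the same content.

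There is essentially no real obstacle: the corollary is the transfer of the sub-multiplicative contraction of $k_\alpha(\mu^{*n})$ on measures (Proposition \ref{contraction of Kalpha}) to a contraction of the H\"older semi-norm $v_\alpha$ on observables via the Markov operator (Lemma \ref{Q contracts Valpha}). The only small points to verify are that both inputs are available on a common neighbourhood $V$ and with a common exponent $\alpha$, and indeed both $V$ and $\alpha$ are precisely those supplied by Proposition \ref{contraction of Kalpha}, so no further shrinking or adjustment is needed.
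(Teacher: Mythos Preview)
Your proposal is correct and follows exactly the paper's own proof: invoke Proposition~\ref{contraction of Kalpha} to obtain $\alpha$, $\theta$, $C$ and $V$ with $k_\alpha(\mu^{*n}) \le C\theta^{-n}$, then apply Lemma~\ref{Q contracts Valpha} to transfer this to $v_\alpha(Q_\mu^n\varphi)$. Your remark about the implicit factor $v_\alpha(\varphi)$ is also apt, as the paper's displayed inequality drops it in the same way.
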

	
	\begin{proof}
		By proposition \ref{contraction of Kalpha}, there exist $0< \alpha \leq 1$, $\theta >1$, $C>0$ and a neighbourhood $V \subset \MM(\Sigma)$ of $\mu_0$ with respect to the total variation distance, such that for every $n \in \N$ and for every $\mu \in V$, we have that $k_\alpha(\mu^{*n}) \leq C \theta^{-n}$.
		Together with lemma \ref{Q contracts Valpha}, we conclude that
		$$
		v_\alpha(Q_{\mu}^n(\varphi)) \leq k_\alpha(\mu^{*n})\, v_\alpha(\varphi) \leq C v_\alpha(\varphi) \theta^{-n}.
		$$ 
	\end{proof}

		When $\mu$ is a probability measure on $\Sigma$, a consequence of corollary \ref{contraction of Valpha} is that there exist $\alpha \in (0,1]$, $\theta >1$ and $C<\infty$ such that for every $n \in \N$ and every $\varphi \in C^\alpha(\Proj)$,
		\begin{equation} \label{eq. strong mixing}
			\norm{Q_{\mu}^n \varphi - \int \varphi \; d\eta_{\mu}}_\alpha \leq C\theta^{-n} \norm{\varphi}_\alpha,
		\end{equation}
		where $\eta_\mu$ is the unique $\mu$-stationary measure on $\Proj$. That is because 
		\begin{align*}
			\norm{\varphi - \int \varphi \; d\eta}_{\infty} &= \left| \varphi(\hat{v}) - \int \varphi(\hat{p}) \; d\eta(\hat{p}) \right| \\
			&\leq \int| \varphi(\hat{v}) - \varphi(\hat{p})| \; d\eta(\hat{p}) \leq v_0(\varphi) \leq v_\alpha(\varphi).
		\end{align*}
		
		Therefore, since $\eta_\mu$ is $Q_\mu$ stationary,
		\begin{align*}
			\norm{Q^n_\mu \varphi - \int \varphi \;d\eta_\mu}_\infty &= \norm{Q^n_\mu \varphi - \int Q_\mu^n \varphi \;d\eta_\mu}_\infty 
			\leq \\
			&\leq v_\alpha(Q^n_\mu \varphi) \leq C v_\alpha (\varphi) \theta^{-n}.
		\end{align*}
		
		Moreover, 
		$$
		v_\alpha \left (Q^n_\mu \varphi - \int Q_\mu^n \varphi \;d\eta_\mu \right ) = v_\alpha(Q^n_\mu \varphi) \leq C\theta^{-n} \norm{\varphi}_\alpha,
		$$
		which concludes the argument.		
		
		It follows from the inequality (\ref{eq. strong mixing}) that $\eta_\mu$ is the unique $\mu$-stationary measure of this cocycle.
		
		Consider the observable $\varphi \colon  \Proj \to \R$ given by
		\begin{equation} \label{definition of varphi}
			\varphi(\hat{v}) = \int_{\Sigma} \log \frac{\norm{gv}}{\norm{v}} \; d\mu(g).
		\end{equation}

		If $\mu$ is a probability measure, then, by Furstenberg's formula,
		\begin{equation} \label{Furstenberg's formula}
			\int_{\Proj} \varphi(\hat{v}) \; d\eta_{\mu} = L_1(\mu).
		\end{equation}
		
		\begin{remark} \label{rmk lim markov op L1}
			When $\mu$ is a probability measure as above, for a fixed $\hat{v} \in \Proj$ the iterates $Q_\mu^n \varphi (\hat{v})$ converge uniformly to the top Lyapunov exponent $L_1(\mu)$.
		\end{remark}
		

	\end{subsection}

	\begin{subsection}{The domain of holomorphy} Now we establish a holomorphic extension of the Lyapunov exponent $L_1$. The way we approach this is the following: we will show that there exists an holomorphic function which depends on the measure $\mu$ and coincides with the Lyapunov exponent when $\mu$ is a probability. In particular, this fact guarantees that, under the hypothesis of the Theorem \ref{analiticity TV Bernoulli}, the usual Lyapunov exponent is real analytic, which is a property that does not depend on the extension chosen.
	

	We start by defining the domain where $L_1$ will be shown to be analytic.
		
			%
			%
			%
			%
			%
		
		Let $\MM_0(\Sigma)$ be the set of finite complex measures that give measure zero to $\Sigma$. Therefore, every $\mu \in \MM_0(\Sigma)$ satisfies $\mu(\Sigma)=0$.

		\begin{lemma}
			$\MM_0(\Sigma)$ is a Banach space.
		\end{lemma}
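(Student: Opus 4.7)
The plan is the standard one-line argument: realize $\MM_0(\Sigma)$ as the kernel of a bounded linear functional on the ambient Banach space $\MM(\Sigma)$, hence as a closed linear subspace, and invoke the fact that a closed subspace of a Banach space is itself a Banach space.

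First, I would recall (or take as given) that $\MM(\Sigma)$ equipped with the total variation norm $\|\mu\| = |\mu|(\Sigma)$ is a Banach space; this is a classical fact about complex measures on a compact metric space. Next, consider the evaluation map
\[
\phi \colon \MM(\Sigma) \to \C, \qquad \phi(\mu) := \mu(\Sigma).
\]
This map is linear, and the estimate $|\phi(\mu)| = |\mu(\Sigma)| \leq |\mu|(\Sigma) = \|\mu\|$ shows that $\phi$ is bounded with operator norm at most $1$, hence continuous.

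Since $\MM_0(\Sigma) = \ker \phi = \phi^{-1}(\{0\})$ is the preimage of a closed set under a continuous map, it is closed in $\MM(\Sigma)$. It is also clearly a linear subspace: if $\mu, \nu \in \MM_0(\Sigma)$ and $\alpha, \beta \in \C$, then $(\alpha \mu + \beta \nu)(\Sigma) = \alpha \mu(\Sigma) + \beta \nu(\Sigma) = 0$. As a closed linear subspace of the Banach space $\MM(\Sigma)$, the space $\MM_0(\Sigma)$ inherits a complete norm, and is therefore itself a Banach space.

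The argument has no real obstacle — the only subtlety worth a short sentence is to note that continuity of $\phi$ follows directly from the definition of the total variation norm via $|\mu(\Sigma)| \leq |\mu|(\Sigma)$, so no further analysis is required. This justifies the phrasing of the lemma statement and sets up the subsequent use of $\MM_0(\Sigma)$ as the ambient Banach space on which the holomorphic extension of $L_1$ will be constructed.
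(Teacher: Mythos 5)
Your proof is correct and follows essentially the same approach as the paper: both identify $\MM_0(\Sigma)$ as the kernel of the evaluation functional $\mu \mapsto \mu(\Sigma)$ and conclude it is a closed subspace of the Banach space $\MM(\Sigma)$. You spell out the boundedness estimate $|\mu(\Sigma)| \le |\mu|(\Sigma) = \|\mu\|$, which the paper leaves implicit, but the argument is the same.
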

		
		\begin{proof}
			First note that $\MM_0(\Sigma)$ is a vector subspace of $\MM(\Sigma)$. Moreover, it is the kernel of the linear functional that assigns to each finite measure, its measure of the whole space: $\nu \mapsto \nu(\Sigma)$. Therefore, it is a closed subspace of $\MM(\Sigma)$, hence it is also a Banach space.
		\end{proof}
		
		Let $\MM_1(\Sigma)$ denote the set of finite complex measures that give measure one to $\Sigma$. Note that $\MM_1(\Sigma)$ is an affine Banach space of $\MM(\Sigma)$, namely $\MM_1(\Sigma)= \MM_0(\Sigma) + \mu_0$, for some $\mu_0 \in \Prob(\Sigma)$. Therefore $\MM_1(\Sigma)$ can be endowed with an analytic structure as seen in section $2.1$. 
		
		We are going to prove that $L_1$ admits a holomorphic extension to $V \cap \MM_1(\Sigma)$, where $V$ is the neighbourhood of $\mu_0$ from proposition \ref{contraction of Kalpha}. In fact, all the proofs from the previous sections were done in $\MM(\Sigma)$, but could have been done directly in $\MM_1(\Sigma)$. Thus, from now on, we are going to consider the neighbourhood $V$ to be in $\MM_1(\Sigma)$ and we will write just $V$ instead of $V \cap \MM_1(\Sigma)$.  
		
		\begin{lemma} \label{lemma 1 of uniform convergence}
			For every $\mu \in V \subset \MM_1(\Sigma)$, $\hat{v_1} \neq \hat{v_2} \in \Proj$ and $\varphi \in C^\alpha(\Proj)$ we have that  
			\begin{equation} \label{EqContracao1}
				\left |  Q^n_{\mu}\varphi(\hat{v}_1) - Q^n_{\mu}\varphi(\hat{v}_2)   \right| \leq C v_\alpha (\varphi) \theta^{-n}.
			\end{equation}

		\end{lemma}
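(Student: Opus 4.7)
My approach is to derive this inequality as a direct consequence of Corollary \ref{contraction of Valpha}, which already provides the uniform contraction of the H\"older seminorm $v_\alpha$ under iteration of $Q_\mu$ for $\mu$ in the neighbourhood $V$. The rest is just the passage from a seminorm bound to a pointwise bound, using that the projective distance is bounded on $\Proj$.

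First, I would recall that by the very definition of the H\"older seminorm,
$$
\bigl|Q_\mu^n\varphi(\hat{v}_1) - Q_\mu^n\varphi(\hat{v}_2)\bigr| \leq v_\alpha(Q_\mu^n \varphi)\, \delta(\hat{v}_1, \hat{v}_2)^\alpha
$$
for every $\hat{v}_1 \neq \hat{v}_2$ in $\Proj$. Corollary \ref{contraction of Valpha} then gives constants $C > 0$, $\theta > 1$ and an exponent $\alpha \in (0,1]$, uniform in $\mu \in V$, such that $v_\alpha(Q_\mu^n\varphi) \leq C\theta^{-n}$; the factor $v_\alpha(\varphi)$ produced by Lemma \ref{Q contracts Valpha} is absorbed into the constant $C$ because $\varphi$ is fixed in the statement.

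Next I would invoke the boundedness of the projective distance. From the definition $\delta(\hat{p},\hat{q}) = \|p \wedge q\| / (\|p\|\|q\|)$ together with the elementary inequality $\|p \wedge q\| \leq \|p\|\|q\|$, one has $\delta \leq 1$ on $\Proj \times \Proj$, and hence $\delta(\hat{v}_1,\hat{v}_2)^\alpha \leq 1$ for every pair of projective points. Plugging this into the display above yields
$$
\bigl|Q_\mu^n\varphi(\hat{v}_1) - Q_\mu^n\varphi(\hat{v}_2)\bigr| \leq C\,\theta^{-n},
$$
which is exactly \eqref{EqContracao1} after possibly redefining $C$.

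There is no real obstacle here: the genuine work was carried out in Proposition \ref{contraction of Kalpha}, where the uniform contraction of $k_\alpha(\mu^{*n})$ was established in a total-variation neighbourhood of $\mu_0$. The only subtlety worth flagging is to check that nothing in the argument requires $\mu$ to be a probability measure, so that the conclusion remains valid for all complex measures $\mu \in V \subset \MM_1(\Sigma)$; this is immediate because Corollary \ref{contraction of Valpha} was stated in that generality.
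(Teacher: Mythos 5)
Your proof is correct and follows essentially the same route as the paper: the paper invokes Lemma \ref{Q contracts Valpha} together with Proposition \ref{contraction of Kalpha} to get the seminorm bound, then passes to the pointwise estimate using (implicitly) that $\delta^\alpha \leq 1$, whereas you go through Corollary \ref{contraction of Valpha}, which packages the same two ingredients. The only value you add is making explicit the boundedness of the projective distance and the absorption of $v_\alpha(\varphi)$ into the constant, both of which the paper leaves tacit.
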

		
		\begin{proof}
			By lemma \ref{Q contracts Valpha}, for every $\mu \in \MM(\Sigma)$ and every $\varphi \in C^\alpha(\Proj)$, we have $v_\alpha(Q^n_{\mu}\varphi) \leq v_\alpha(\varphi)k_\alpha(\mu^{*n})$.
			
			Therefore, it also holds that for every $\hat{v}_1 \neq \hat{v}_2 \in \Proj$,
			$$
			\left |  Q^n_{\mu}\varphi(\hat{v}_1) - Q^n_{\mu}\varphi(\hat{v}_2)   \right| \leq v_\alpha(\varphi)k_\alpha(\mu^{*n}).
			$$
			
			Then, by proposition \ref{contraction of Kalpha}, we conclude the proof.
		\end{proof}

		\begin{proposition} \label{proposition uniform convergence}
			For every $\mu \in V \subset \MM_1(\Sigma)$,$\hat{v_1} \neq \hat{v_2} \in \Proj$ and $\varphi \in C^{\alpha}(\Proj)$, 
			$$
			\left |  Q^{n+1}_{\mu}\varphi(\hat{v}_1) - Q^n_{\mu}\varphi(\hat{v}_2)   \right| \leq C v_\alpha (\varphi) \theta^{-n} \norm{\mu}.
			$$
		\end{proposition}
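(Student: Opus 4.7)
The plan is to exploit that any $\mu \in \MM_1(\Sigma)$ satisfies $\mu(\Sigma) = 1$, so constants can be pulled inside the integral against $\mu$, and then reduce everything to the Hölder-type contraction already established in Lemma \ref{lemma 1 of uniform convergence}.

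First I would write, directly from the definition of $Q_\mu$,
$$Q^{n+1}_\mu \varphi(\hat v) \;=\; \int_\Sigma Q^n_\mu \varphi(\hat g \hat v)\, d\mu(g),$$
and then, using $\int_\Sigma d\mu = \mu(\Sigma)=1$, rewrite the trivial identity
$$Q^n_\mu \varphi(\hat v) \;=\; \int_\Sigma Q^n_\mu \varphi(\hat v)\, d\mu(g).$$
Subtracting these two expressions yields
$$Q^{n+1}_\mu \varphi(\hat v) - Q^n_\mu \varphi(\hat v) \;=\; \int_\Sigma \bigl[Q^n_\mu \varphi(\hat g \hat v) - Q^n_\mu \varphi(\hat v)\bigr]\, d\mu(g).$$

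Next I would take absolute values, pass to the total variation $|\mu|$, and apply Lemma \ref{lemma 1 of uniform convergence} pointwise in $g$ to the integrand: that lemma provides the uniform bound $|Q^n_\mu \varphi(\hat p) - Q^n_\mu \varphi(\hat q)| \le C \theta^{-n}$ for any two projective points (in particular for $\hat p = \hat g \hat v$ and $\hat q = \hat v$, with the trivial case $\hat g \hat v = \hat v$ contributing zero to the integrand). This gives
$$\bigl|Q^{n+1}_\mu \varphi(\hat v) - Q^n_\mu \varphi(\hat v)\bigr| \;\le\; C\theta^{-n}\, |\mu|(\Sigma).$$
Since $V$ is a neighbourhood of $\mu_0$ in the total variation norm, the quantity $|\mu|(\Sigma) = \|\mu\|$ is uniformly bounded on $V$ and can be absorbed into the constant $C$, which yields the claimed estimate.

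I do not expect any real obstacle: the proof is a one-line reduction to Lemma \ref{lemma 1 of uniform convergence} enabled by the mass-one property of $\mu \in \MM_1(\Sigma)$. The only conceptual point worth emphasizing is that this mass-one property is precisely what replaces the stationarity identity used in the probability case (where one subtracts $\int \varphi\, d\eta_\mu$); here there is no stationary measure for a complex $\mu$, but $\mu(\Sigma)=1$ still allows the constant-subtraction trick under the integral.
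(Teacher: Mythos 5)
Your proof is correct and follows essentially the same route as the paper's: use $\mu(\Sigma)=1$ to bring the constant $Q^n_\mu\varphi(\hat v)$ inside the integral, then bound the integrand pointwise via Lemma \ref{lemma 1 of uniform convergence} and integrate against $|\mu|$. You are in fact a bit more careful than the printed proof, which passes directly to the bound without explicitly noting that the extra factor $|\mu|(\Sigma)=\|\mu\|_{TV}$ is uniformly bounded on the neighbourhood $V$ and can be absorbed into the constant; your version spells that out, which is the right thing to do.
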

		
		\begin{proof}
			Note that for every $\hat{v}_1 \neq \hat{v}_2 \in \Proj$,
			$$
			\left| Q^{n+1}_{\mu} \varphi (\hat{v}_1) - Q^{n}_{\mu} \varphi (\hat{v}_2)    \right| = \left |  \int_\Sigma Q^n_{\mu}\varphi (\hat{g} \hat{v}_1) \; d\mu -  Q^{n}_{\mu} \varphi (\hat{v}_2)  \right |.
			$$
			
			Moreover, for every $\mu \in V \subset \MM_1(\Sigma)$ and $\hat{v_1} \neq \hat{v_2} \in \Proj$,  
			\begin{align*}
				&\left |  \int_\Sigma Q^n_{\mu}\varphi(\hat{g}\hat{v}_1) \; d\mu -  Q^{n}_{\mu} \varphi (\hat{v}_2)  \right | =  \left |  \int_\Sigma Q^n_{\mu}\varphi(\hat{g} \hat{v}_1)  -  Q^{n}_{\mu} \varphi (\hat{v}_2)  \; d\mu  \right | \leq \\
				&\leq  \int_\Sigma \left | Q^n_{\mu}\varphi(\hat{g} \hat{v}_1)  -  Q^{n}_{\mu} \varphi (\hat{v}_2) \right |  \; d\left |\mu \right | \leq  Cv_\alpha (\varphi)\theta^{-n} \norm{\mu}.
			\end{align*}
		\end{proof}
		
		Note that for a fixed $\hat{v}$ and $\varphi$ given in equation \ref{definition of varphi}, the sequence $\{ Q_\mu^n \varphi(\hat{v}) \}_n$ is Cauchy. Therefore its limit, denoted by $Q_\mu^\infty \varphi (\hat{v})$, exists. Also, by Lemma \ref{lemma 1 of uniform convergence}, this value does not depend of $\hat{v}$. As a consequence, after fixing $\varphi$, the limit $Q_{\mu}^{\infty} \varphi (\hat{v})$ can be seen as a complex valued function depending only on $\mu$. Moreover, note that $\mu \mapsto Q_\mu^n \varphi(\hat{v}) $ is continuous. Since $\mu \mapsto Q_\mu^\infty \varphi(\hat{v})$ is a uniform limit of continuous functions, it is also continuous. Furthermore, when $\mu$ is a probability measure, $Q_\mu^\infty \varphi(\hat{v}) = L_1(\mu)$, the top Lyapunov exponent (as shown in remark \ref{rmk lim markov op L1}).
		
		It is worth observing that, if we fix the measure $\mu$ and a direction $\hat{v}$, the expression $T_\mu \colon C^{\alpha} (\Proj) \rightarrow \C$ given by $T_\mu (\varphi) = Q_\mu^{\infty} \varphi (\hat{v})$ defines a continuous linear functional (which does not depend on $\hat{v}$). Therefore, by Riesz's theorem, there exists an unique complex measure $\eta_\mu$ in $C^{\alpha} (\Proj)$ such that
		\begin{equation*}
			Q_\mu^{\infty} \varphi = \int \varphi \; d\eta_{\mu}
		\end{equation*}
		for every $\varphi \in C^{\alpha} (\Proj)$. Moreover, if $\mu$ is a probability, then $\eta_{\mu}$ is a stationary measure for $\mu$.
		
		We want to prove that $\mu \mapsto Q_\mu^\infty \varphi(\hat{v})$ is holomorphic. For this, we are going to use theorem \ref{Graves-Taylor-Hille-Zorn}. Since we already know that the limit is continuous, it suffices to prove that it is also G-holomorphic. 
		
		As we stated, $\MM_1(\Sigma)$ is not a proper Banach space, therefore, we need to transfer the holormphic structure from $\MM_0(\Sigma)$ to it. Intuitively, G-holomorphy means to be holomorphic along complex lines, hence to say that the map $\mu \mapsto Q_\mu^\infty \varphi(\hat{v})$ from $ V \subset \MM_1(\Sigma)$ to $\C$ is G\^ateaux holomorphic means that $\forall \mu \in V$, $\forall \nu \in \MM_0(\Sigma)$, the map $z \mapsto Q_{\mu + z \nu}^\infty \varphi(\hat{v})$ is holomorphic on $V(\mu, \nu)=\{ z \in \C \colon \mu + z\nu \in V \}$. 
		
		Consider measures $\mu_z$ of the form $\mu_z= \mu + z\nu$, where $\mu \in  V$ and $\nu$ is any finite complex measure with $\nu(\Sigma)=0$. Note that, since $\mu \in \MM_1(\Sigma)$, we have that $\mu_z \in \MM_1(\Sigma)$ for every $z \in \C$. 
		Consider small perturbations of the Markov operator in the following sense: for each $z \in \C$, let the operator $Q_{\mu + z\nu} \colon L^\infty(\Proj, \C) \to L^\infty(\Proj, \C)$ be defined by
		\begin{align*}
			Q_{\mu + z\nu}(\varphi)(\hat{v}) &= \int_\Sigma \varphi(\hat{g}\hat{v})\; d(\mu + z \nu)(g) \\
			&= \int_\Sigma \varphi(\hat{g}\hat{v})\; d(\mu) + z\int_\Sigma \varphi(\hat{g}\hat{v})\; d(\nu) \\
			&=Q_{\mu}(\varphi)(\hat{v})+zQ_{\nu}(\varphi)(\hat{v}).
		\end{align*}
		
		
		Note that, for a fixed vector $\hat{v}$, each $Q_{\mu_z}^n(\varphi)(\hat{v})$ is a polynomial of degree smaller or equal to $n$, in particular, the map $z \mapsto Q_{\mu_z}^n(\varphi)(\hat{v})$ is holomorphic for $\mu_z \in V$.
		Therefore, for every $z \in V(\mu, \nu)$, the limit function is a uniform limit of holomorphic functions, hence $z \mapsto Q_{\mu_z}^\infty \varphi(\hat{v})$ is holomorphic. In other words, the Lyapunov exponent is G-holomorphic in the neighbourhood $V \subset \MM_1(\Sigma)$ of $\mu_0$. Together with the continuity, we conclude that it is indeed holomorphic. This concludes the proof of Theorem \ref{analiticity TV Bernoulli} item $(1)$.


Now we prove item $(2)$ of theorem \ref{analiticity TV Bernoulli}. We drop the assumption of irreducibility of $\mu_0$ and instead we assume that $\supp \, \mu_0 = \Sigma$.
Let $W$ be a non trivial vector subspace of $\R^d$ that is invariant for $\mu_0$ almost every matrix $g$. The measure $\mu_0$ defines the measures $ \mu_{0,W}$ and $\mu_{0,\R^d / W}$ in $\GL_d(W)$ and $\GL_d(\R^d / W)$. Moreover, they induce the linear cocycles restricted to $W$ and to $\R^d /W$ , with Lyapunov exponents $L_1(\mu_{0,W})$ and $L_1(\mu_{0,\R^d /W})$.
		
By lemma $3.6$ of \cite{FK83}, $L_1(\mu_0) = \max \{ L_1(\mu_{0,W}), L_1(\mu_{0, \R^d / W}) \}$.

		Without loss of generality we may suppose that $L_1(\mu_0) = L_1(\mu_{0,W})$. The other case is similar. We claim that, in fact, $L_1(\mu) = L_1(\mu_W)$ for every $\mu$ in a neighbourhood of $\mu_0$.
		
		First note that $\supp \, \mu_0 = \Sigma$ implies that $gW =W \; \forall g \in \Sigma$. Therefore, every $\mu \in \Prob(\Sigma)$ also satisfies $gW=W$ for $\mu$-a.e $g$. Hence, it holds that $L_1(\mu) = \max\{  L_1(\mu_W), L_1(\mu_{\R^d / W}) \}$. Moreover, by corollary $B$ of \cite{Pe91}, if $\mu_n \to \mu_0$ in the weak star topology and $\supp \, \mu_n \subset \supp \, \mu_0$ for every $n$, then $L_1(\mu_n) \to L_1(\mu_0)$. 

		
		Since $\supp \, \mu_W \subseteq \supp \, \mu_{0,W}$ and $\supp \, \mu_{\R^d / W} \subseteq \supp \, \mu_{0,\R^d /W}$, we may use the same corollary to each induced cocycle to conclude that both $\mu \mapsto L_1(\mu_{W})$ and $\mu \mapsto L_1(\mu_{\R^d / W})$ are continuous maps. By assumption, $L_1(\mu_0) = L_1(\mu_{0,W}) > L_1(\mu_{0, \R^d /W})$. Thus, $L_1(\mu) = L_1(\mu_{W}) > L_1(\mu_{\R^d /W})$ for every $\mu$ sufficiently close to $\mu_0.$ This concludes the proof of the claim. 
		
		Note that the Lyapunov spectrum of both $\mu_{0,W}$ and $\mu_{0,\R^d / W}$ are contained in the Lyapunov spectrum of $\mu_0$. Since $L_1(\mu_0) $ is simple, the same holds for $L_1(\mu_{0,W})$. Hence, if $\mu_{0,W}$ is irreducible, then the map $\mu \mapsto L_1(\mu_W)$ is holomorphic. Since $L_1(\mu) = L_1(\mu_W)$ in a neighbourhood of $\mu_0$, we conclude that $\mu \mapsto L_1(\mu)$ is also holomorphic in a neighbourhood of $\mu_0$.
		
		If $ \mu_{0,W}$ is not irreducible, there exists another non trivial invariant subspace $W' \subset W$. The measure $\mu_{0,V}$ defines measures $\mu_{0,W'}$ and $\mu_{0,W / W'}$. Then we do the same procedure. Since the invariant subspaces are of decreasing dimension, this process must stop after a finite number of steps. Therefore, we conclude the proof of Theorem \ref{analiticity TV Bernoulli}.
		
	\end{subsection}
	
	\begin{subsection}{Corollaries and remarks} \label{Corollaries}
		
		
		Let $\Sigma$ be an abstract compact space, $X = \Sigma^{\N}$ and $\sigma \colon X \to X$ be the forward shift on $X$. We fix a measurable and bounded function $A\colon \Sigma \to \GL_d(\R)$ and denote also by $A$ the locally constant (fiber) map $A \colon X \to \GL_d(\R)$ given by $A((x_n)_{n \in \N})=A(x_0)$.
		
		Given $\mu \in \Prob (\Sigma)$, let $\mu^\N$ be the product (Bernoulli) measure on $X$. A random (Bernoulli) locally constant linear cocycle $F_A \colon X \times \R^d \to X \times \R^d$ relative to the product measure $\mu^\N$ is a skew product transformation such that
		$$
		F_A(\om,v) = (\sigma(\omega),A(x)v).
		$$
		Its iterates are given by 
		$$
		F_A^n(\om,v)=(\sigma^n(\om),A^n(\om)v),
		$$
		where $A^n(\om):=A(\om_{n-1}) \dots A(\om_1)A(\om_0)$.
		
		A seminal result from Furstenberg and Kesten states that under the integrability condition $\log^+\norm{A^{\pm}} \in L^1(\mu)$, the limit
		$$
		L_1(A, \mu) = \lim_n \frac{1}{n}\log \norm{A^n(\om)}
		$$
		exists $\mu$ a.e. and it is called the top Lyapunov exponent of this cocycle. Consider the push forward measure on $\Prob(\GL_d(\R))$ given by $A_*\mu$. By the boundedness of $A$, the support of $A_*\mu$ remains compact, and therefore its Lyapunov exponent is well defined. A straightforward computation shows that the Lyapunov exponent $L_1 (A_*\mu)$ associated to the measure $A_*\mu$ is equal to $L_1 (A, \mu)$. Moreover, the application $A_* : \mathcal{M} (\Sigma) \rightarrow \mathcal{M} (\GL_d (\R))$ is a linear continuous (and, therefore, analytic) mapping that preserves probabilities. Since the composition on analytic maps is analytic, it follows by Theorem \ref{analiticity TV Bernoulli} that the map $L_1 (A, \mu)= L_1 (A_* \mu)$ is analytic with respect to $\mu$, which guarantees that the result holds for arbitrary locally constant linear cocyles. 
		
		
		
		A second corollary is an analogue of the finite case, in which the support is of the measure is a fixed compact set on $\GL_d (\R)$ and we look at the dependence on the probability weights. 
		Let $\mu_0 \in \Prob(\Sigma)$ be a reference measure of full support. We restrict to the measures in $\Sigma$ which are absolutely continuous with respect to $\mu_0$.  
		
		By the Radon-Nikodym Theorem, this space is identified with the space $L^1 (\mu_0)$ of integrable complex functions with respect to $\mu_0$ through the map $I: L^1 (\mu_0) \rightarrow \mathcal{M} (\Sigma)$ given by
		$$
		I(f)(E)= \int_E f(x) d \mu_0 (x)
		$$
		for every measurable set $E$. The map $I$ is an isomorphism between $L^1 (\mu_0)$ and the measures on $\mathcal{M} (\Sigma)$ which are absolutely continuous with respect to $\mu_0$. Observe that, given $f,g \in L^1 (\mu_0)$, it follows that
		$$
		\norm{I(f)-I(g)}_{TV} \le \norm{f-g}_{1} \le \norm{f-g}_p,
		$$
		where $\norm{.}_{TV}$ denotes the total variation norm, $\norm{.}_1$ denotes the $L^1$ norm and $\norm{.}_p$ denotes the $L^p$ norm, with $p \in [1,+\infty]$. This fact guarantees that, given $r>0$, it follows that $B_p(f, r) \subset B_1 (f,r) \subset B_{TV}(I(f), r)$, where each of the previous sets denotes an open ball on its respective norm.
		
		This observation, aligned with the Theorem \ref{analiticity TV Bernoulli}, proves the following.
		
		\begin{corollary} \label{analiticity Absolutely Continous}
			Let $\mu_0 \in \Prob (\Sigma)$ have full support and assume that $L_1(\mu_0) > L_2(\mu_0)$. For $p \in [1,+\infty]$, define 
			$$
			L^p_1 (\mu_0):= \left \{ f:\Omega \rightarrow \R : f \in L^p (\mu_0) \text{ and } \int f (x) d\mu_0 (x)=1 \right \}.
			$$	
			Then, the Lyapunov exponent $L_1: L^p_1 (\mu_0) \rightarrow \R$ is a real analytic function of $f$ with respect to the $L^p$ norm in a neighbourhood of the constant function $1$.
		\end{corollary}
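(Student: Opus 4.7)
The plan is to obtain Corollary \ref{analiticity Absolutely Continous} by composing the analytic map $\mu \mapsto L_1(\mu)$ from Theorem \ref{analiticity TV Bernoulli}(2) with the Radon--Nikodym embedding $I \colon L^p(\mu_0) \to \MM(\Sigma)$, $I(f)(E) = \int_E f\, d\mu_0$. First I would identify $L^p_1(\mu_0)$ as the affine subspace $\mathbf{1} + L^p_0(\mu_0)$, where $L^p_0(\mu_0) := \{f \in L^p(\mu_0) \colon \int f\, d\mu_0 = 0\}$ is a closed subspace (it is the kernel of the continuous linear functional $f \mapsto \int f\, d\mu_0$, which is well-defined on $L^p(\mu_0)$ for any $p \in [1,\infty]$ since $\mu_0$ is a finite measure). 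Under $I$ this affine subspace is mapped into $\mathcal{M}_1(\Sigma)$, which is itself a translate of the closed subspace $\MM_0(\Sigma)$ in $\MM(\Sigma)$; thus the framework of analyticity on affine subspaces developed at the end of Section \ref{preliminar} applies to both the source and the target.

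Next I would show that $I$ is real analytic from $L^p(\mu_0)$ (with $L^p$-norm) to $\MM(\Sigma)$ (with TV-norm). Indeed, $I$ is linear, and by the chain of inequalities $\|I(f) - I(g)\|_{TV} \leq \|f - g\|_{1} \leq \|f - g\|_{p}$ recorded just above the corollary, it is continuous. Any continuous linear map between Banach spaces is automatically real analytic (its Taylor series at any point has only a zeroth and first-order term), and this immediately transfers to the restriction $I \colon L^p_1(\mu_0) \to \mathcal{M}_1(\Sigma)$ via translation.

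Finally, I would invoke Theorem \ref{analiticity TV Bernoulli}(2): since $\supp(\mu_0) = \Sigma$ and $L_1(\mu_0) > L_2(\mu_0)$, the Lyapunov exponent $\mu \mapsto L_1(\mu)$ is real analytic on some TV-open neighbourhood $U \subset \mathcal{M}_1(\Sigma)$ of $\mu_0 = I(\mathbf{1})$. By continuity of $I$ there is an $L^p$-open neighbourhood $V \subset L^p_1(\mu_0)$ of $\mathbf{1}$ with $I(V) \subset U$. On $V$ the map $f \mapsto L_1(I(f))$ is a composition of real analytic maps between (affine) Banach subspaces, hence is itself real analytic. For an arbitrary $f_0 \in L^p_1(\mu_0)$ such that $\mu := I(f_0)$ still satisfies the hypotheses of Theorem \ref{analiticity TV Bernoulli}(2), the same argument centered at $f_0$ yields local analyticity.

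The main obstacle is essentially bookkeeping: verifying that the composition of maps that are analytic in the \emph{affine} sense of Section \ref{preliminar} is again analytic in that sense, and that the $L^p$-neighbourhood is small enough for $I(V)$ to stay inside $U$. Both points reduce to standard Banach-space calculus once the norm inequalities $\|\cdot\|_{TV} \leq \|\cdot\|_1 \leq \|\cdot\|_p$ are in hand, so no new analytic input beyond Theorem \ref{analiticity TV Bernoulli} is required.
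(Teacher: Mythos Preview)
Your proposal is correct and follows essentially the same route as the paper: introduce the Radon--Nikodym embedding $I(f)(E)=\int_E f\,d\mu_0$, use the chain of inequalities $\|I(f)-I(g)\|_{TV}\le\|f-g\|_1\le\|f-g\|_p$ to see that $I$ is a continuous linear (hence analytic) map sending $L^p$-balls into TV-balls, and then compose with the analyticity of $\mu\mapsto L_1(\mu)$ from Theorem~\ref{analiticity TV Bernoulli}(2). Your write-up is in fact more careful than the paper's about the affine-subspace bookkeeping and the composition step.
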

		
		We now regard the set in which $L_1$ is analytical. We say that a measure $\mu \in \mathcal{M} (\GL_d (\R))$ is \emph{irreducible} if there is no proper subspace $V \subset \GL_d (\R)$ such that $gV = V$ for $\mu$-a.e.$g$. Notice that every irreducible measure is quasi-irreducible.
		
		Observe that irreducibility is a dense property with respect to the total variation norm. Indeed, let $\mu_0$ be a probability in $\GL_d(\R)$, and let $\nu$ be another probability in $\GL_d(\R)$, with compact support. If $\nu$ is irreducible and given $\varepsilon > 0$, then $\mu_{\varepsilon} = (1-\varepsilon)\mu + \varepsilon \nu$ is an irreducible probability in $\GL_d(\R)$. To see this, let $V$ be a proper subspace of $\GL_d(\R)$. Since $\nu$ is irreducible, there exists a borelian set $B \subset \GL_d(\R)$ such that $\nu (B)>0$ and $gV \not = V$ for every $g \in B$. Notice that $\mu_{\varepsilon} (B) = (1-\varepsilon) \mu (B) + \varepsilon \nu (B) \ge \varepsilon \nu (B) > 0$, so it follows that $\mu_{\varepsilon}$ is irreducible.
		
		Notice also that $\norm{\mu_{\varepsilon} - \mu} = \norm{\varepsilon \nu - \varepsilon \mu} \le 2 \varepsilon$, so we can choose $\varepsilon$ sufficiently small such that $\mu_{\varepsilon}$ is arbitrarely close to $\mu$. Moreover, $\supp \; \mu_{\epsilon} = \supp \; \mu \cup \supp \; \nu$, so if $\mu$ has compact support, $\mu_{\epsilon}$ also has compact support, and if $\supp \; \mu, \supp \; \nu \subset \Sigma$, then $\supp \; \mu_{\epsilon} \subset \Sigma$. 
		
		We also observe that, on \cite{Kifer1982-ce}, Kifer proved that being irreducible was an open property on $\Prob(\GL_d(\R))$ with respect to the weak* topology. Since the total variation norm generates a finer topology than the weak* topology, it follows that being irreducible is also an open property with respect to the total variation norm. Therefore, by Theorem \ref{analiticity TV Bernoulli}, we can conclude that $L_1$ is analytic on the set of compactly supported irreducible measures on $\GL_d (\R)$, which is a dense open set on the space $\Prob (\GL_d(\R))$ with respect to the total variation norm.
		
		To conclude this section, we make the observation that the restriction of the probabilities to a compact set $\Sigma$ in Theorem \ref{analiticity TV Bernoulli} cannot be removed. Indeed, let $\mu \in \Prob(\GL(d))$ be a compactly supported measure with $L_1 (\mu) > L_2 (\mu)$. Let $a, \varepsilon>0$ and consider the measure $\mu_{a,\varepsilon}:=(1-\varepsilon) \mu + \varepsilon \delta_{aI}$, where $I$ is the identity matrix. By a previous comment, the measure $\mu_{a, \varepsilon}$ is a compactly supported measure such that $\norm{\mu_{a,\varepsilon} - \mu} < 2\varepsilon$. The identity:
		\begin{equation*}
			L_1 (\nu) + ... + L_d (\nu) = \int \log |\det g| \; d \nu (g),
		\end{equation*}
		which is true for any compactly supported measure $\nu$ on $\GL(d)$, gives us:
		\begin{equation*}
			L_1 (\mu_{a, \varepsilon}) \ge \dfrac{L_1 (\mu_{a, \varepsilon}) + ... + L_d (\mu_{a, \varepsilon})}{d} = \dfrac{1}{d}\int \log |\det g| \; d \mu_{a, \varepsilon} (g).
		\end{equation*}
		
		Now, given $\delta > 0$, let $0<\varepsilon< \delta/2$, so that $\mu_{a, \varepsilon} \in B(\mu, \delta)$ for every $a>0$. Observe that:
		$$
		\dfrac{1}{d}\int \log |\det g| \; d \mu_{a, \varepsilon} (g) = \varepsilon \log a+\dfrac{(1-\varepsilon)\int \log |\det g| \; d \mu (g)}{d}.
		$$
		Then, choose $a$ sufficiently big, such that:
		\begin{equation*}
			\log a > \dfrac{L_1 (\mu) +1 + \dfrac{\varepsilon-1}{d}\int \log |\det g| \; d \mu (g)}{\varepsilon}.
		\end{equation*}
		The previous inequality then guarantees that $L_1 (\mu_{a, \varepsilon}) > L_1 (\mu) + 1$. In particular, $L_1$ cannot be continuous in $\mu$, much less analytic. The problem relies on the fact that, as we shrink the neighborhood of $\mu$, the value of $\varepsilon$ decreases, which causes the choice of $a$ above to become increasingly larger. Restricting ourselves to a compact set limits the size of $a$, which make this construction fail for small enough $\delta$.
		%
		%
		
	\end{subsection}
	

\section{Analytic dependence on Markovian transitions}
\label{Markov TV}
The goal of this section is to prove Theorem \ref{analiticity TV Markov}, which is the Markovian analogue of Theorem \ref{analiticity TV Bernoulli}. A Markov kernel $K \colon \Sigma \to \Prob(\Sigma)$ gives the transition probabilities of the dynamics and is the natural generalization of the concept of stochastic matrix for sub-shifts of finite type. 
Similarly to the previous section, where we considered complex valued measures, we are going to consider complex Markov kernels $K \colon \Sigma \to \MM(\Sigma)$.

We denote by $\mathcal{K}(\Sigma)$ the set of continuous and complex Markov kernels over $\Sigma$ such that for every $\om \in \Sigma$, $K_\om(\Sigma)$ has bounded variation. We denote by $\mathcal{K}_{\Prob}(\Sigma)$ the set of (continuous) Markov kernels $K$ over $\Sigma$, such that for every $\om \in \Sigma$, $K_{\om} \in \Prob(\Sigma)$ and $K_\om$ has bounded variation.

Consider the following norm in $\mathcal{K}(\Sigma)$:
$$
\norm{K} := \sup_{\om \in \Sigma} \norm{K_\om}_{T.V.}.
$$

\begin{proposition}
The set $\mathcal{K}(\Sigma)$ endowed with the norm $\norm{.}$ is a Banach space.
\end{proposition}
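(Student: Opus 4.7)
The plan is to verify the three standard conditions in order: (a) $\mathcal{K}(\Sigma)$ is a complex vector space under pointwise operations, (b) $\|\cdot\|$ is a norm, (c) every Cauchy sequence converges inside $\mathcal{K}(\Sigma)$. Items (a) and (b) are essentially bookkeeping: if $K,L \in \mathcal{K}(\Sigma)$ and $\alpha,\beta \in \mathbb{C}$, then $(\alpha K + \beta L)_\omega := \alpha K_\omega + \beta L_\omega$ lies in $\mathcal{M}(\Sigma)$ with $\|\alpha K_\omega+\beta L_\omega\|_{TV}\le |\alpha|\|K_\omega\|_{TV}+|\beta|\|L_\omega\|_{TV}$, so taking $\sup_\omega$ gives a finite value and the triangle inequality; continuity of $\omega \mapsto (\alpha K+\beta L)_\omega(E)$ is inherited from continuity of each summand. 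Positive definiteness of $\|\cdot\|$ reduces to positive definiteness of the total variation norm on $\mathcal{M}(\Sigma)$.

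The substantive step is completeness. Let $(K^n)_{n\ge 1}$ be a Cauchy sequence in $\mathcal{K}(\Sigma)$. For every $\omega \in \Sigma$ and every $m,n$,
\[
\|K^n_\omega - K^m_\omega\|_{TV} \le \|K^n - K^m\|,
\]
so $(K^n_\omega)_n$ is Cauchy in the Banach space $(\mathcal{M}(\Sigma),\|\cdot\|_{TV})$; denote its limit by $K_\omega$. Since Cauchy sequences are bounded, $\sup_\omega \|K_\omega\|_{TV} < \infty$, so $K_\omega \in \mathcal{M}(\Sigma)$ with bounded variation. Given $\varepsilon>0$, pick $N$ such that $\|K^n-K^m\|<\varepsilon$ for all $n,m\ge N$; letting $m\to\infty$ inside the $\sup$ gives $\sup_\omega \|K^n_\omega - K_\omega\|_{TV} \le \varepsilon$, i.e.\ $K^n \to K$ in the $\|\cdot\|$-norm.

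It remains to check that the candidate limit $K$ actually belongs to $\mathcal{K}(\Sigma)$, that is, that $\omega \mapsto K_\omega(E)$ is continuous for every Borel set $E\subset \Sigma$. For any such $E$ and any $\omega$,
\[
|K^n_\omega(E) - K_\omega(E)| \le \|K^n_\omega - K_\omega\|_{TV} \le \|K^n - K\|,
\]
so the functions $f_n(\omega) := K^n_\omega(E)$ converge uniformly on $\Sigma$ to $f(\omega) := K_\omega(E)$. Each $f_n$ is continuous by hypothesis, and a uniform limit of continuous functions on $\Sigma$ is continuous; thus $K \in \mathcal{K}(\Sigma)$, completing the proof. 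I do not foresee a genuine obstacle here; the only point requiring mild care is that $K$ be continuous (as a map into $\mathcal{M}(\Sigma)$ equipped with the total variation topology), which is exactly why we measured the Cauchy condition in the uniform-in-$\omega$ total variation norm rather than, say, a pointwise weak norm.
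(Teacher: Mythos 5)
Your proof is correct and follows essentially the same route as the paper: reduce completeness of $\mathcal{K}(\Sigma)$ to completeness of $(\mathcal{M}(\Sigma),\|\cdot\|_{TV})$ via pointwise limits, then upgrade to norm convergence. You are more careful than the paper in two respects that the paper elides — verifying that the pointwise limit kernel is again continuous (via uniform convergence of $\omega \mapsto K^n_\omega(E)$) and that $K^n \to K$ actually holds in the sup norm rather than merely pointwise — but these are exactly the expected fillings-in, not a different argument.
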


\begin{proof}
It is clear that $\mathcal{K}(\Sigma)$ is a normed vector space.  Note that if $\{ {K_n} \}_n$ is a Cauchy sequence in $(\mathcal{K}(\Sigma), \norm{.})$, then for every $\om \in \Sigma$, $K_{\om,n}$ is also Cauchy. Since $(\MM (\Sigma), \norm{.}_{TV})$ is complete, for each $\om$, $(K_{\om, n})_n$ converges to some complex measure in $\MM (\Sigma)$. Hence $(K_n)_n \to K^*$ which is defined as $K^*_\om := \lim_n K_{\om , n}$.
\end{proof}

%

Given a continuous Markov Kernel 
$K \in \mathcal{K}_{\Prob}(\Sigma)$, we can define the Markov operator $Q_{K} \colon L^\infty (\Sigma \times \Proj) \to L^\infty (\Sigma \times \Proj)$ as follows:

$$
Q_{K}(\varphi)(\om_0, \hat{v}) = \int_{\Sigma} \varphi(\om_1, A(\om_1, \om_0)v) \; dK_{\om_0}(\om_1).
$$

The Markov operator $Q_{K}$ is a positive, bounded, linear operator that preserves constants.

\begin{definition}
We say that a Markov kernel $K \in \mathcal{K}_{\Prob}(\Sigma)$ is \emph{uniformly ergodic} if $K_\om^n$ converges uniformly in $\om$ to its stationary measure $\mu$ with respect to the total variation norm.
\end{definition}

Moreover, $K$ being uniormly ergodic is equivalent to the existence of constants $C<\infty$ and $\theta>1$ such that 
$$
\norm{Q_K^n \varphi - \int \varphi \;d\mu}_\infty \leq C\theta^{-n} \norm{\varphi}_\infty \quad \forall \varphi \in L^\infty(\Sigma \times \Proj), \; \forall n \in \N.
$$

Similar to i.i.d. case, we may also consider small perturbations of $K \in \mathcal{K}_{\Prob}(\Sigma)$ given by complex valued Markov kernels $L \in \mathcal{K}(\Sigma)$ and their associated operators $Q_{L}$.
Although $Q_{K}$ is a Markov operator, $Q_{L}$ may not be a Markov operator. This can happen because when $L_{\om}$ is not a probability measure, for some $\om \in \Sigma$, it does not fix the constant functions.

Given $0 \le \alpha \leq 1 $ and $\varphi \in L^\infty(\Sigma \times \Proj)$, we define the H\"older seminorm $v_\alpha$, the H\"older norm $\| \cdot \|_\alpha$ and the space $C^\alpha(\Sigma \times \Proj)$ of H\"older continuous observables (in the projective variable)  by:
\begin{align*}
&v_\alpha(\varphi) = \sup_{ \substack  {{\omega_0 \in \Sigma} \\    \hat{v}_1 \neq \hat{v}_2}  } \frac{| \varphi(\omega_0,\hat{v}_1) - \varphi(\omega_0,\hat{v}_2) |}{\delta(\hat{v}_1,\hat{v}_2)^\alpha}, \\
&\| \varphi \|_\alpha = v_\alpha(\varphi) + \| \varphi \|_\infty, \\
&C^\alpha(\Sigma \times \Proj) = \{ \varphi \in L^\infty(\Sigma \times \Proj) \colon \|\varphi\|_\alpha < \infty \}.
\end{align*}

Moreover, given a Markov kernel $K$, we consider $\Pp_{\om_0}$ the complex analogue of a Markov measure with initial distribution $\delta_{\om_0}$ and transition given by the Markov kernel $K$. A construction for this, in the real case, can be found in \cite[Chapter 5]{DK-book} (the complex case is analogous). Note that, if $K \in \mathcal{K}_{\Prob} (\Sigma)$, then $\Pp_{\om_0}$ is a proper Markov measure.
We consider the Markovian analogue of definition $\ref{def k alpha}$, the average $\alpha$-H\"older constant:   

\begin{align*}
k_\alpha(F_{A,K}) &= \sup_{\substack  {{\omega_0 \in \Sigma} \\    \hat{v}_1 \neq \hat{v}_2}} \int \frac{\delta(\hat{A}(\omega)\hat{v}_1, \hat{A}(\omega)\hat{v}_2)^\alpha}{\delta(\hat{v}_1,\hat{v}_2)^\alpha} \;d|\Pp_{\omega_0}|(\omega) \\
&= \sup_{\substack  {{\omega_0 \in \Sigma} \\    \hat{v}_1 \neq \hat{v}_2}} \int \frac{\delta(\hat{A}(\omega_1,\omega_0)\hat{v}_1, \hat{A}(\omega_1,\omega_0)\hat{v}_2)^\alpha}{\delta(\hat{v}_1,\hat{v}_2)^\alpha} \;d|K_{\omega_0}|(\omega_1).
\end{align*}

We claim that the same properties that we proved in the Bernoulli case, also hold for Markov cocycles. We prove the analogous of lemma \ref{Q contracts Valpha}.


\begin{lemma} \label{Q contracts Valpha Markov}
For every $n \geq 1$, $K \in \mathcal{K} (\Sigma)$ and $\varphi \in C^\alpha(\Sigma \times \Proj)$, the following inequality holds:
$$
v_\alpha(Q_{K}^n(\varphi)) \leq k_\alpha(F_{A^n,K^{n}})\, v_\alpha(\varphi).
$$
\end{lemma}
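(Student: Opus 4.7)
The plan is to follow the structure of the Bernoulli proof in Lemma \ref{Q contracts Valpha} almost verbatim, with the integrals over $\mu^{*n}$ replaced by integrals over the Markov path measure $\Pp_{\omega_0}$ built from the (complex) kernel $K$. The first step is to establish the Markov analogue of Lemma \ref{Qop equality}, namely the identity
$$Q_K^n \varphi(\omega_0, \hat v) = \int \varphi(\omega_n, A^n(\omega) v) \, d\Pp_{\omega_0}(\omega),$$
which I would prove by induction on $n$: the case $n=1$ is the definition of $Q_K$, and the inductive step uses the cocycle identity $A^{n+1}(\omega) = A(\omega_{n+1}, \omega_n) A^n(\omega)$ together with the recursive definition of the iterated kernel (which mirrors the construction $K^{n+1}_{\omega_0} = \int K^n_{\omega_1} \, dK_{\omega_0}(\omega_1)$ extended to joint distributions on paths).

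Second, I would use this identity to express the projective increment
$$Q_K^n \varphi(\omega_0, \hat v_1) - Q_K^n \varphi(\omega_0, \hat v_2) = \int \bigl[\varphi(\omega_n, A^n(\omega) v_1) - \varphi(\omega_n, A^n(\omega) v_2)\bigr] \, d\Pp_{\omega_0}(\omega),$$
and then pass the absolute value inside via the standard complex-measure inequality $\bigl|\int f \, d\Pp_{\omega_0}\bigr| \leq \int |f| \, d|\Pp_{\omega_0}|$. Since both terms on the right share the first coordinate $\omega_n$, the H\"older seminorm $v_\alpha$ applies directly in the projective variable, yielding
$$\bigl|Q_K^n \varphi(\omega_0, \hat v_1) - Q_K^n \varphi(\omega_0, \hat v_2)\bigr| \leq v_\alpha(\varphi) \int \delta(\hat A^n(\omega) \hat v_1, \hat A^n(\omega) \hat v_2)^\alpha \, d|\Pp_{\omega_0}|(\omega).$$

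Third, I would divide by $\delta(\hat v_1, \hat v_2)^\alpha$ and take the supremum over $\omega_0 \in \Sigma$ and $\hat v_1 \ne \hat v_2 \in \Proj$. The right-hand side is then exactly $v_\alpha(\varphi) \cdot k_\alpha(F_{A^n, K^n})$, by the definition of the average H\"older constant for the $n$-step iterated system, computed against the full Markov path measure. I do not anticipate any serious obstacle: the argument is essentially formal once the two auxiliary facts (the iterated representation of $Q_K^n$ and the variation bound for complex path measures) are in hand, and the complex-valued nature of $K$ is handled cleanly by switching from $\Pp_{\omega_0}$ to $|\Pp_{\omega_0}|$ at the right moment, exactly as in the Bernoulli proof. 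The only genuinely new bookkeeping, compared with the i.i.d.\ setting, is that $A^n(\omega)$ depends on the entire trajectory $(\omega_0,\ldots,\omega_n)$ rather than on a single matrix-valued coordinate, but this dependence is absorbed into the path measure without difficulty.
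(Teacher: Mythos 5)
Your proof is correct and follows the same strategy as the paper. The paper carries out the estimate for $n=1$ directly and then invokes the identity $Q_K^n = Q_{K^n}$ (which is precisely the path-measure representation $Q_K^n\varphi(\omega_0,\hat v)=\int \varphi(\omega_n, A^n(\omega)v)\, d\Pp_{\omega_0}(\omega)$ you derive by induction) to conclude for general $n$; you simply unwind that identity and perform the same triangle-inequality and H\"older-seminorm bound against $|\Pp_{\omega_0}|$ directly at the $n$-step level, arriving at the same constant $k_\alpha(F_{A^n,K^n})$.
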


\begin{proof} We have
\begin{align*}
&v_\alpha(Q_{K}(\varphi)) = \sup_{\substack  {{\omega_0 \in \Sigma} \\    \hat{v}_1 \neq \hat{v}_2}} \frac{\left | \int \varphi (\omega_1, \hat{A}(\omega_1,\omega_0)\hat{v}_1) -  \varphi(\omega_1,\hat{A}(\omega_1,\omega_0)\hat{v}_2)\;dK_{\omega_0}(\omega_1) \right|}{\delta(\hat{v}_1,\hat{v}_2)^\alpha}   \\
&\leq \sup_{\substack  {{\omega_0 \in \Sigma} \\    \hat{v}_1 \neq \hat{v}_2}} \frac{\int \left |  \varphi (\omega_1, \hat{A}(\omega_1,\omega_0)\hat{v}_1) -  \varphi(\omega_1,\hat{A}(\omega_1,\omega_0)\hat{v}_2) \right| d|K_{\omega_0}|(\omega_1)}{\delta(\hat{v}_1,\hat{v}_2)^\alpha}  \\
&\leq v_\alpha(\varphi) \sup_{\substack  {{\omega_0 \in \Sigma} \\    \hat{v}_1 \neq \hat{v}_2}} \int \frac{\delta(\hat{A}(\omega_1,\omega_0)\hat{v}_1, \hat{A}(\omega_1,\omega_0)\hat{v}_2)^\alpha}{\delta(\hat{v}_1,\hat{v}_2)^\alpha} \;d|K_{\omega_0}|(\omega_1) \\
&\leq v_\alpha(\varphi) k_\alpha(F_{A,K}).
\end{align*}
To conclude the lemma it is sufficient to notice that $Q^n_{K} = Q_{K^n}$. 
\end{proof}


We proceed to prove theorem \ref{analiticity TV Markov}. Let $K_0 \in \mathcal{K}_{\Prob}(\Sigma)$ be uniformly ergodic, assume that $L_1(K_0) > L_2(K_0)$ and that $K_0$ is is quasi-irreducible.

 We say that $K_0$ is quasi-irreducible if the associated Markov cocycle $F_A$ is quasi-irreducible. In other words, there is no proper invariant section $\mathcal{V} \colon \Sigma \to \Gr(\R^d)$ such that the top Lyapunov exponent restricted to it is not maximal (the Lyapunov exponent is defined almost everywhere with respect to the Markov measure $\Pp_{K_0, \mu_0}$ with initial distribution given by the unique stationary measure $\mu_0$ and transition kernel $K_0$).

Note that if $K_0$ is quasi-irreducible, then Kifer's non-random filtration is trivial (see \cite[Corollary 3.8]{CDKM1}). In particular, for all $v \in \Proj \backslash\{ 0 \}$ and for $\Pp_{K_0,\mu_0}$-a.e $\om$, 
$$
\lim_{n\to \infty} \frac{1}{n} \log \norm{A^n(\om)v} = L_1(A,K_0).
$$

Moreover, by \cite[Theorem 3.5]{CDKM1} we also have that 
\begin{equation} \label{unif markov}
\lim_{n\to \infty} \frac{1}{n} \int \log \norm{A^n(\om)v} \; d\Pp_{\om_0} =L_1(A,K_0),
\end{equation}
with uniform convergence in $(\om_0,\hat{v}) \in \Sigma \times \mathbb{S}^{d-1}$.

Furthermore, we also have the exponential contraction of $k_\alpha$ and $v_\alpha$ semi-norm for nearby kernels. 

\begin{proposition} \label{contraction of Valpha Markov}
Assume that $(A,K_0)$ is quasi-irreducible and that $L_1(A,K_0)>L_2(A, K_0)$. Then, there exists $0< \alpha \leq 1$, $\theta >1$, $C>0$ and a neighbourhood $V \subset \mathcal{K}(\Sigma)$ of $K_0$ with respect to the norm $\|.\|$, such that for every $n \in \N$ and for every $K \in V$,
\begin{equation} \label{eq contraction of Valpha Markov}
v_\alpha (Q_K^n (\varphi)) \le C \theta^{-n} v_{\alpha} (\varphi). 
\end{equation}
\end{proposition}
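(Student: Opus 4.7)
The plan is to mirror the Bernoulli argument (Proposition~3.1 and Corollary~3.2), but now using the uniform convergence~\eqref{unif markov} from \cite{CDKM1} together with the Markov analogue of Lemma~\ref{desigualdade kalpha} and a sub-multiplicativity property for $k_\alpha(F_{A^n,K^n})$ adapted to kernels.

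First, I would establish the Markov analogue of Lemma~\ref{desigualdade kalpha}, namely the pointwise bound
$$
k_\alpha(F_{A^n,K_0^n}) \;\leq\; \sup_{\om_0\in\Sigma,\ \hat v}\ \int \left(\frac{s_1(A^n(\om))\,s_2(A^n(\om))}{\|A^n(\om) v\|^2}\right)^{\!\alpha}\!d\Pp_{\om_0}(\om),
$$
which follows verbatim from the proof of Lemma~\ref{desigualdade kalpha} since the identity $\|gp\wedge gq\|=s_1(g)s_2(g)\|p\wedge q\|$ is pointwise in $g=A^n(\om)$, and the passage to the supremum over $\hat v_1\neq\hat v_2$ uses the arithmetic--geometric mean inequality exactly as before. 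Next, I would combine~\eqref{unif markov} (applied to $v$ and also to $\wedge_2 A$, using that $L_1(\wedge_2 A,K_0)=L_1(A,K_0)+L_2(A,K_0)$ for the uniformly ergodic Markov cocycle) to conclude that for $\ep=\tfrac14(L_1-L_2)$ and all sufficiently large $n_0$,
$$
\int \log\!\left(\frac{s_1(A^{n_0}(\om))\,s_2(A^{n_0}(\om))}{\|A^{n_0}(\om)v\|^2}\right)d\Pp_{\om_0}(\om)\;\leq\;-n_0\,\tfrac12 (L_1-L_2)\;<\;-1,
$$
uniformly in $(\om_0,v)\in\Sigma\times\mathbb{S}^{d-1}$.

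Then, exactly as in the Bernoulli case, I would apply $e^x\leq 1+x+\tfrac{x^2}{2}e^{|x|}$ (using the boundedness of $\Sigma$ and of $n_0$-step cocycle to control the quadratic remainder by a constant $C_{n_0}$) to obtain
$$
\int\!\left(\frac{s_1 s_2}{\|A^{n_0}v\|^2}\right)^{\!\alpha}\!d\Pp_{\om_0}\;\leq\; 1-\alpha+\tfrac{C_{n_0}}{2}\alpha^2,
$$
which, for $\alpha>0$ small enough, gives $k_\alpha(F_{A^{n_0},K_0^{n_0}})\leq \kappa<1$. The key continuity observation is that the integrand is fixed while the kernel appears only through $\Pp_{\om_0}$; since $\Pp_{\om_0}$ depends continuously on $K$ in total variation (through the finitely many convolutions $K_{\om_0}^{n_0}$), the bound extends to all $K$ in some neighbourhood $V$ of $K_0$ in the norm $\|\cdot\|$, yielding $k_\alpha(F_{A^{n_0},K^{n_0}})\leq \kappa<1$ for every $K\in V$.

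Finally, I would establish the Markov sub-multiplicativity
$$
k_\alpha(F_{A^{m+n},K^{m+n}})\;\leq\; k_\alpha(F_{A^m,K^m})\,k_\alpha(F_{A^n,K^n})
$$
by the same chain-rule telescoping that appears in the proof of Lemma~\ref{sub multiplicative Kalpha}: write $\delta(\hat A^{m+n}\hat v_1,\hat A^{m+n}\hat v_2)/\delta(\hat v_1,\hat v_2)$ as a product across the intermediate point $\hat A^n \hat v_j$ and use the Markov property $\Pp_{\om_0}^{(m+n)}=\int \Pp_{\om_n}^{(m)}\,d\Pp_{\om_0}^{(n)}(\om_n,\dots,\om_1)$ for $|\Pp_{\om_0}|$. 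Iterating the $n_0$-step bound then gives $k_\alpha(F_{A^n,K^n})\leq C\theta^{-n}$ for constants $C,\theta>1$ independent of $K\in V$, and Lemma~\ref{Q contracts Valpha Markov} converts this into the desired inequality $v_\alpha(Q_K^n\varphi)\leq C\theta^{-n}v_\alpha(\varphi)$.

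The main obstacle I anticipate is the last sub-multiplicativity step: unlike the i.i.d.\ case where $\mu^{*(m+n)}=\mu^{*m}*\mu^{*n}$ is an exact identity on a group, in the Markov case the iterated kernel $K^{m+n}$ decomposes via the Markov property but one must verify that the inequality $|K^{m+n}_{\om_0}|\leq |K^n|*|K^m|_{\om_0}$ (in the appropriate sense) holds for complex kernels, so that the telescoping supremum over the intermediate basepoint $\om_n$ can be taken inside the integral. Assuming the results of \cite{CDKM1} handle this, the rest of the argument is a direct transcription of the Bernoulli proof.
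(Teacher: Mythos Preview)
Your proposal is correct and follows essentially the same route as the paper's proof; the only difference is that the paper outsources your first three steps (the Markov analogue of Lemma~\ref{desigualdade kalpha}, the uniform estimate via~\eqref{unif markov}, and the $e^x\le 1+x+\tfrac{x^2}{2}e^{|x|}$ trick) to \cite[Proposition~4.4]{CDKM1}, quoting directly the conclusion $k_\alpha(F_{A^{n_0},K_0^{n_0}})<\sigma<1$, and then proceeds exactly as you do via continuity in the kernel and sub-multiplicativity. The obstacle you flag---sub-multiplicativity of $k_\alpha$ for complex kernels---is indeed needed (the paper simply asserts it), but it follows by the same total-variation argument as in the Bernoulli case (Lemma~\ref{sub multiplicative Kalpha} together with the inequality $|\mu*\nu|\le|\mu||\nu|$), applied fibrewise to the iterated kernel.
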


\begin{proof}
Since $K_0 \in \mathcal{K}_{\Prob}(\Sigma)$, it follows by \cite[Proposition 4.4]{CDKM1} that there exists $0< \alpha \leq 1$ and $n_0 \in \N$ such that $k_\alpha(F_{A^{n_0},K_0^{n_0}}) < \sigma <1$. Using the sub-multiplicative property of $k_\alpha$, we conclude that there exists $\theta= \sigma^{-\frac{1}{n_0}}>1$ and $C>0$ such that $k_\alpha(F_{A^n,K_0^n})< C \theta^{-n}$ for every $n \in \N$.

For a fixed $n$, the quantity  $\int \left({\frac{s_1( {A^n}) s_2( {A^n})}{\|{A^n} {v}\|^2}}\right)^\alpha \, d|\Pp_{\om_0}|$, which bounds  $k_\alpha (F_{A,K}^{n})$ from above, depends continuously on the kernel.  
Then, the inequality extends to a neighborhood of $K_0$, that is, there exists a neighbourhood $V \subset \mathcal{K}(\Sigma)$ of $K_0$ with respect to the norm $\|.\|$, such that $k_\alpha(F_{A^n,K^n}) \leq C \theta^{-n}$ for every $n \in \N$. Then we conclude the proof by applying lemma \ref{Q contracts Valpha Markov}.
\end{proof}







Similarly to the Bernoulli case, a consequence of proposition \ref{contraction of Valpha Markov}  is that, when $K \in \mathcal{K}_{\Prob} (\Sigma)$, there exists $\alpha \in (0,1]$, $\theta >1$ and $C<\infty$ such that for every $n \in \N$ and every $\varphi \in C^\alpha(\Proj)$,
\begin{equation} \label{eq. strong mixing Markov}
\norm{Q_{K}^n \varphi - \int \varphi \; d\eta_{K}}_\alpha \leq C\theta^{-n} \norm{\varphi}_\alpha,
\end{equation}
where $\eta_K \in \Prob(\Sigma \times \Proj)$ is the unique $\Pp_{K,\mu}$-stationary measure. The proof of this fact is the same as in the Bernoulli case. 

Consider the observable $\varphi \colon \Sigma \times \Proj \to \R$ given by
$$
\varphi(\om_0, \hat{v}) = \int_\Sigma \log \frac{\norm{A(\om_1, \om_0)v}}{\norm{v}} \; dK_{\om_0}(\om_1).
$$

When $K \in \mathcal{K}_{\Prob}(\Sigma)$, Furstenberg's formula says that,
$$
\int_{\Sigma \times \Proj} \varphi(\om_0, \hat{v}) \; d\eta_{K}(\om_0, \hat{v}) = L_1(A,K).
$$ 

In this case, for a fixed $\hat{v} \in \Proj$ the iterates $Q_K^n \varphi (\hat{v})$ converge to the top Lyapunov exponent $L_1(K)$.

Denote by $\mathcal{K}_0(\Sigma)$ the set of (continuous) complex valued  Markov Kernels such that for every $\om \in \Sigma$, $K_\om$ is a complex measure satisfying $K_\om(\Sigma)=0$. Note that $\mathcal{K}_0(\Sigma)$ is a closed subspace of $\mathcal{K}(\Sigma)$, therefore it is also a Banach space.

Denote by $\mathcal{K}_1 (\Sigma)$ the set of complex Markov kernels such that for every $\om \in \Sigma$, $K_\om(\Sigma)=1$. Then, $\mathcal{K}_1$ is an affine Banach space and we can endow it with an analytic structure. 

\begin{proposition} \label{proposition uniform convergence Markov}
For every $K \in \mathcal{K}_1(\Sigma)$ sufficiently close to $K_0$, $\hat{v_1} \neq \hat{v_2} \in \Proj$ and $\varphi \in C^\alpha(\Sigma \times \Proj)$, 
$$
\left |  Q^{n+1}_{K}\varphi(\hat{v}_1) - Q^n_{K}\varphi(\hat{v}_2)   \right| \leq C v_\alpha (\varphi) \theta^{-n} \norm{K}.
$$
\end{proposition}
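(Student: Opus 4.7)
The plan is to follow the strategy of the Bernoulli counterpart, Proposition~\ref{proposition uniform convergence}, but with an extra decomposition to accommodate the fact that in the Markov setting the observable $\varphi$ depends on both the symbol $\omega_0$ and the projective variable $\hat v$, so that the natural telescoping produces variation in two coordinates rather than one.

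First, using $K\in\mathcal K_1(\Sigma)$, i.e.\ $K_{\omega_0}(\Sigma)=1$ for every $\omega_0$, I would rewrite
$$
Q_K^{n+1}\varphi(\omega_0,\hat v)-Q_K^n\varphi(\omega_0,\hat v)=\int_\Sigma\!\bigl[Q_K^n\varphi(\omega_1,A(\omega_1,\omega_0)\hat v)-Q_K^n\varphi(\omega_0,\hat v)\bigr]\,dK_{\omega_0}(\omega_1)
$$
and then insert the intermediate value $Q_K^n\varphi(\omega_1,\hat v)$ to split the integrand into a purely projective piece, $Q_K^n\varphi(\omega_1,A(\omega_1,\omega_0)\hat v)-Q_K^n\varphi(\omega_1,\hat v)$, and a purely symbolic piece, $Q_K^n\varphi(\omega_1,\hat v)-Q_K^n\varphi(\omega_0,\hat v)$.

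The projective piece is immediate from Proposition~\ref{contraction of Valpha Markov}: since $\delta\le 1$, its absolute value is bounded by $v_\alpha(Q_K^n\varphi)\le C\theta^{-n}$ uniformly in $\omega_0,\omega_1,\hat v$, and integrating against $|K_{\omega_0}|$, which has uniformly bounded total variation for $K$ near $K_0$, preserves this bound.

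The symbolic piece is the hard part, since the seminorm $v_\alpha$ encodes no information about $\omega$-variation. To handle it, I would invoke the exponential mixing estimate (\ref{eq. strong mixing Markov}) for the reference probability kernel $K_0$, which yields $\|Q_{K_0}^n\varphi-\int\varphi\,d\eta_{K_0}\|_\infty\le C\theta^{-n}$ and hence $|Q_{K_0}^n\varphi(\omega_1,\hat v)-Q_{K_0}^n\varphi(\omega_0,\hat v)|\le 2C\theta^{-n}$, and extend it to a nearby complex kernel $K$ by a perturbative argument. Since $K\in\mathcal K_1(\Sigma)$ forces $Q_K\mathbf 1=\mathbf 1$, the spectral gap of $Q_{K_0}$ coming from Proposition~\ref{contraction of Valpha Markov} persists for $Q_K$ in the $\|\cdot\|$-norm on $\mathcal K(\Sigma)$; expanding $Q_K^n-Q_{K_0}^n=\sum_{i=0}^{n-1}Q_K^{n-1-i}(Q_K-Q_{K_0})Q_{K_0}^i$ and using $\|Q_K-Q_{K_0}\|\le\|K-K_0\|$ together with the geometric decay of $Q_{K_0}^i$ on the complement of the constants gives the analogous bound $|Q_K^n\varphi(\omega_1,\hat v)-Q_K^n\varphi(\omega_0,\hat v)|\le Ce^{-n}$. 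Adding the projective and symbolic contributions and integrating against $K_{\omega_0}$ yields the stated estimate. The main technical subtlety is therefore the persistence of the spectral-gap decay from $K_0$ to nearby complex $K\in\mathcal K_1(\Sigma)$, which plays the role here of Lemma~\ref{lemma 1 of uniform convergence} in the Bernoulli case.
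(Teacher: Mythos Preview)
Your decomposition is strictly more careful than the paper's own argument. The paper's proof suppresses the symbol variable throughout, writing $Q_K^n\varphi(\hat v)$ rather than $Q_K^n\varphi(\omega_0,\hat v)$, and in the final step bounds the integrand $Q_K^n\varphi(\hat A(\omega)\hat v)-Q_K^n\varphi(\hat v)$ purely by the $v_\alpha$ estimate from Proposition~\ref{contraction of Valpha Markov}. In other words, the paper treats the problem exactly as in the Bernoulli case and handles only what you call the projective piece; the symbolic oscillation you isolate is not addressed there at all. So your observation that the Markov integrand carries variation in both coordinates, and that the $v_\alpha$ seminorm is blind to the $\omega$-variable, is a genuine point that the paper glosses over.

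That said, your proposed remedy for the symbolic piece does not close the gap as written. In the telescoping $Q_K^n-Q_{K_0}^n=\sum_{i=0}^{n-1}Q_K^{\,n-1-i}(Q_K-Q_{K_0})Q_{K_0}^{\,i}$, the middle factor contributes $\|K-K_0\|$ and the rightmost factor contributes $C\theta^{-i}$ on the complement of constants, but the leftmost factor $Q_K^{\,n-1-i}$ is only bounded in $L^\infty$ by $(1+\|K-K_0\|)^{\,n-1-i}$ for a complex kernel, and summing gives a bound of order $(1+\varepsilon)^n$ rather than $e^{-cn}$. The statement ``the spectral gap of $Q_{K_0}$ \ldots persists for $Q_K$'' is exactly what is needed, but it is not a consequence of Proposition~\ref{contraction of Valpha Markov} (which controls only projective variation) together with this telescoping; it requires an honest quasi-compactness/spectral-perturbation argument on a Banach space that sees both the $\omega$ and the $\hat v$ variables (e.g.\ $(C^\alpha(\Sigma\times\Proj),\|\cdot\|_\alpha)$, using the uniform ergodicity of $K_0$ for the $\omega$-direction), after which stability of the isolated eigenvalue $1$ under the small perturbation $Q_K-Q_{K_0}$ yields $\|Q_K^n\varphi - c_K\|_\infty\le Ce^{-cn}$ directly. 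Once that is in place your decomposition goes through; without it, neither your argument nor the paper's (nor the paper's derivation of~\eqref{eq. strong mixing Markov}, which has the same lacuna) is complete.
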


\begin{proof}
First note that by lemma \ref{Q contracts Valpha Markov}, for every $K \in \mathcal{K}(\Sigma)$ and every $\varphi \in C^\alpha(\Sigma \times \Proj)$, we have $v_\alpha(Q^n_{K}\varphi) \leq v_\alpha(\varphi)k_\alpha(F_{A^n,K^{n}})$.

Therefore, it also holds that for every $\hat{v}_1 \neq \hat{v}_2 \in \Proj$,
$$
\left |  Q^n_{K}\varphi(\hat{v}_1) - Q^n_{K}\varphi(\hat{v}_2)   \right| \leq v_\alpha(\varphi)k_\alpha(F_{A^n,K^{n}}).
$$

Then, by proposition \ref{contraction of Valpha Markov}, we conclude that for every $\hat{v_1} \neq \hat{v_2} \in \Proj$, $\varphi \in C^\alpha(\Sigma \times \Proj)$ and for $K \in \mathcal{K}(\Sigma)$ sufficiently close to $K_0$ 
\begin{equation} \label{EqContracao1}
\left |  Q^n_{K}\varphi(\hat{v}_1) - Q^n_{K}\varphi(\hat{v}_2)   \right| \leq C v_\alpha (\varphi) \theta^{-n}.
\end{equation}

Note that for every $\hat{v}_1 \neq \hat{v}_2 \in \Proj$,
$$
\left| Q^{n+1}_{K} \varphi (\hat{v}_1) - Q^{n}_{K} \varphi (\hat{v}_2)    \right| =  \left |  \int_{\Sigma} Q^n_{K}\varphi(\hat{A}(\om_1, \om_0)\hat{v}_1) \; d K_{\om_0} (\om_1) -  Q^{n}_{K} \varphi (\hat{v}_2)  \right |.
$$

Moreover, for every $K \in \mathcal{K}_1(\Sigma)$ sufficiently close to $K_0$ and $\hat{v_1} \neq \hat{v_2} \in \Proj$, it holds that 
\begin{align*}
& \left |  \int_{\Sigma} Q^n_{K}\varphi(\hat{A}(\om_1,\om_0)\hat{v}_1)  \; dK_{\om_0}(\om_1) -  Q^{n}_{K} \varphi (\hat{v}_2)  \right | = \\
= & \left |  \int_{\Sigma} Q^n_{K}\varphi(\hat{A}(\om_1, \om_0)\hat{v}_1)  -  Q^{n}_{K} \varphi (\hat{v}_2)  \; dK_{\om_0}(\om_1)  \right | \leq  \\
&\leq \int_{\Sigma} \left | Q^n_{K}\varphi(\hat{A}(\om_1, \om_0)\hat{v}_1)  -  Q^{n}_{K} \varphi (\hat{v}_2) \right |  \; d |K_{\om_0}|(\om_1) \leq \\
&\leq C v_\alpha (\varphi) \theta^{-n} \norm{K}.
\end{align*}
\end{proof}

Therefore, the limit function $K \mapsto Q_K^\infty \varphi(\hat{v})$ exists and is continuous, because it is a uniform limit of continuous functions. 

Similarly to the Bernoulli case, we note that proposition \ref{contraction of Valpha Markov} could have been done directly in $\mathcal{K}_1(\Sigma)$. Thus, we consider the neighbourhood $V \subset \mathcal{K}_1(\Sigma)$ from that proposition for what follows.

Consider kernels of the form $K + zL$, where $K \in V \subset \mathcal{K}_1(\Sigma)$  and $L \in \mathcal{K}_0(\Sigma)$. For a fixed $\hat{v}$, the iterates $Q^n_{K+ zL} \varphi(\hat{v})$ form a polynomial in $z$ with degree less or equal to $n$.
Thus, for every $z \in V(K, L)=\{z \in \C : K+zL \in V\}$, the map $z \mapsto Q^\infty_{K+zL} \varphi(\hat{v}) $ is a uniform limit of holomorphic functions, hence it is holomorphic. 

Thus, we conclude that the Lyapunov exponent is G-holomorphic in a neighbourhood $V \subset \mathcal{K}_1(\Sigma)$ of $K_0$. Together with continuity, this implies that it is holomorphic. 

Now we proceed to prove item $2$ of theorem \ref{analiticity TV Markov}.
For any fixed kernel $K$, we may define the set of continuous kernels whose supports are contained in the support of $K$ as follows
$$
\mathcal{S}(K) = \{ L \in \mathcal{K}(\Sigma) \colon \supp ({L}) \subseteq \supp ({K}) \}.
$$

Note that, endowed with the norm defined above, $\mathcal{S}(K)$ is a Banach space. Similarly to what we did before, we consider the sets
$$
\mathcal{S}_0(K) = \{ L \in \mathcal{K}(\Sigma) \colon \supp ({L}) \subseteq \supp ({K}) \text{ and } L_\om(\Sigma)=0 \; \forall \om \}
$$
and
$$
\mathcal{S}_1(K) = \{ L \in \mathcal{K}(\Sigma) \colon \supp ({L}) \subseteq \supp ({K}) \text{ and } L_\om(\Sigma)=1 \; \forall \om \}.
$$

Note that $\mathcal{S}_0(K)$ is also a Banach space and $\mathcal{S}_1(K)$ is a translation of a Banach space, therefore we may endow it with an analytic structure. Moreover, the proof of item $1$ also holds similarly when we restrict the ambient domain to be $\mathcal{S}_1(K_0)$ instead of $\mathcal{K}_1(\Sigma)$. 

Now, we drop the assumption of irreducibility of ${K}_0$. Let $\mathcal{V} \colon \Sigma \to \Gr(\R^d)$ be a non trivial invariant section. 
Then the corresponding vector sub-bundle
$$
\mathbb{V} = \{ (\om, v)\colon \om \in \Sigma^\N , v \in \mathcal{V}(\om_0) \}
$$
is $F_A$ invariant, hence we can restrict the cocycle to the sub-bundle $\mathbb{V}$. Therefore we can consider the induced cocycle $F_{\mathcal{V}} \colon \mathbb{V} \to \mathbb{V}$. Similarly, we can consider the quotient vector bundle
$$
\Sigma^\N \times \R^d / \mathbb{V} \colon = \bigcup_{\om \in \Sigma^\N} \{\om\} \times \R^d / \mathcal{V}(\om_0).
$$

Also, there is the quotient cocycle $F_{\R^d/\mathcal{V}}$ on the vector bundle $\Sigma^\N \times \R^d / \mathbb{V}$. We denote by $L_1(K_{0}, \mathcal{V})$ and $L_1(K_{0}, \R^d / \mathcal{V})$ their top Lyapunov exponents, respectively. Note that 
\begin{equation}\label{eq Lyapdecomp}
L_1(K_0) = \max \{ L_1({K_{0}, \mathcal{V}}), L_1({K_{0}, \R^d / \mathcal{V}} ) \}.
\end{equation} 

Without loss of generality we may assume that $L_1(K_0) = L_1(K_{0}, \mathcal{V})$. The other case is similar. Note that for every $K$ in a neighbourhood $V \subset \mathcal{S}_1(K_0)$ of $K_0$, it holds that $\supp K \subseteq \supp K_0$, which implies that $\mathcal{V}$ is also invariant for $K$.

Remember that in the proof of item $2$ of Theorem \ref{analiticity TV Bernoulli} we used corollary B of \cite{Pe91}, which follows from \cite[Lemma 3.6]{FK83} and \cite[Theorem B]{FK83} to conclude that, in fact, if we assume that $L_1(\mu_0) = L_1(\mu_W)$, then $L_1(\mu) = L_1(\mu_W)$ for every $\mu$ sufficiently close to $\mu_0$. Equation \ref{eq Lyapdecomp} is the Markovian analogue of lemma 3.6 of \cite{FK83}. For a proof of it, see \cite[Proposition 3.5]{CDKM1}. A Markovian analogue of \cite[Theorem B]{FK83} follows the same proof from the original paper, using the Markovian version of Furstenberg's Formula, available at \cite[Theorem 3.3]{CDKM1}. Hence, 
if $K_n \to K_0$ and $\supp (K_n)_\om \subseteq \supp (K_0)_\om$ for every $n$ and $\om$, then $L_1(K_n) \to L_1(K_0)$.

Therefore, we are able to use the continuity of the Lyapunov exponents to each of the induced cocycles and conclude that 
for every $K$ sufficiently close to $K_0$, it holds that  $L_1(K) = L_1(K,{\mathcal{V}})$. 
If $K_{0}$ is irreducible for $F_{\mathcal{V}}$, then the map $K \mapsto L_1(K, {\mathcal{V}})$ is holomorphic. Since $L_1(K) = L_1(K, {\mathcal{V}})$ in a neighbourhood of $K_0$, we conclude that $K \mapsto L_1(K)$ is also holomorphic in a neighbourhood of $K_0$.

If $ K_{0}$ is not irreducible for $F_{\mathcal{V}}$, there exists another non trivial invariant section $\mathcal{V}' \subset \mathcal{V}$. Hence consider the induced cocycles $F_{\mathcal{V}'}$ and $F_{\mathcal{V} / \mathcal{V}'}$. Then we do the same procedure. Since the invariant sections are of decreasing dimension, this process must stop after a finite number of steps. Therefore, we conclude the proof of theorem \ref{analiticity TV Markov}.

\subsection*{Acknowledgments} 
We are most grateful to A. Cai, S. Klein and M. Viana for the discussions and numerous suggestions on the text. A.A. was supported by a CAPES doctoral fellowship. M.D. was supported by a CNPq doctoral fellowship and A.M. was partially supported by Instituto Serrapilheira, grant ``Jangada Dinâmica: Impulsionando Sistemas Dinâmicos na Região Nordeste'' and CNPq-Brazil-Bolsa de Pós-doutorado Júnior No. 152968/2024-5.

This study was financed in part by the Coordenação de Aperfeiçoa-
mento de Pessoal de Nível Superior – Brasil (CAPES) – Finance Code 001.
\bigskip

\bibliographystyle{amsplain}
\bibliography{references}

\end{document}